 \tikzset{help lines/.style={step=#1cm,very thin, color=gray},
help lines/.default=.5} 
\tikzset{thick grid/.style={step=#1cm,thick, color=gray},
thick grid/.default=1} 
\newtheorem{thm}{Theorem}[subsection]
\newtheorem{lem}[thm]{Lemma}
\newtheorem{cor}[thm]{Corollary}
\newtheorem{prop}[thm]{Proposition}
\theoremstyle{definition}
\newtheorem{defn}[thm]{Definition}
\theoremstyle{remark}
\newtheorem{rem}[thm]{Remark}
\numberwithin{equation}{section}
\newcommand{\then}{\Rightarrow}
\DeclareMathOperator{\Hom}{Hom}%
\newcommand{\field}[1]{\mathbb{#1}}
\newcommand{\ZZ}{\ensuremath{{\field{Z}}}}
\newcommand{\CC}{\ensuremath{{\field{C}}}}
\newcommand{\commentout}[1]{}
\newcommand{\cC}{\ensuremath{{\mathcal{C}}}}
\title{Weighted quivers}
\author{Kiyoshi Igusa}
\address{Department of Mathematics, Brandeis University, Waltham, MA 02454}\email{igusa@brandeis.edu}
\author{Moses Kim}
\address{Department of Mathematics, Brandeis University, Waltham, MA 02454}\email{moseskim@brandeis.edu}
\begin{document}
\maketitle

\begin{abstract}
A ``weight'' on a quiver $Q$ with values in a group $G$ is a function which assigns an element of $G$ for each arrow in $Q$. This paper shows that the essential steps in the mutation of quivers with potential [DWZ] goes through with weights provided that the weights on each cycle in the potential have trivial product. This gives another proof of the sign coherence of $c$-vectors. We also classify all weights on tame quivers.
\end{abstract}

\section{Introduction}

Let $Q$ be a finite quiver without loops and let $G$ be any group. A \emph{weight system} for $Q$ is a function assigning an element of $G$ to every arrow in $Q$. A weighted quiver without oriented 2-cycles can be mutated at any vertex $k$ as follows.
\begin{enumerate}
\item For every pair of arrows $a:i\to k$, $b:k\to j$ add a new arrow $[ab]:i\to j$. Give the new arrow the product weight:
\[
	wt[ab]:=wt(a)wt(b)
\]
\item Reverse the orientation of all arrows to and from $k$ and invert the weight of each of these arrows. (This gives $a^\ast:k\to i$ and $b^\ast:j\to k$ with weights $wt(a^\ast)=wt(a)^{-1}$, $wt(b^\ast)=wt(b)^{-1}$.)
\item Remove oriented 2-cycles of trivial weight ($a:i\to j$, $b:j\to i$ with $wt(a)wt(b)=1$).
\end{enumerate}
If all oriented 2-cycles are eliminated, the new weighted quiver $(Q',wt')$ can be mutated again.

The idea is that $G$ is the fundamental group of a pointed space $X$: $G=\pi_1(X,x_0)$ and the weights give a mapping from the quiver $Q$, considered as a 1-dimensional CW-complex to $X$ sending all vertices to the basepoint $x_0\in X$. Each oriented edge gives a loop in $X$ and therefore an element of $G=\pi_1(X)$. When the orientation of the edge is reversed, the element of $\pi_1(X)$ is inverted. In Step 3, 2-cycles can be eliminated iff they are null homotopic.

We say that $(Q,wt)$ is \emph{nondegenerate} if no sequence of mutations produces an oriented 2-cycle with nontrivial weight, i.e., if Step 3 always eliminates all 2-cycles.

One of the main applications of weights on quivers is a new elementary proof of the sign-coherence of $c$-vectors. The proof is given by the following outline.
\begin{enumerate}
\item Start with any finite quiver $Q$.
\item By \cite{DWZ} there is a nondegenerate potential $S$.
\item Choose any system of weight compatible with $S$, i.e., so that every term in $S$ has weight 1. For example, take trivial weights on all edges of all oriented cycles.
\item Lemma: Any sequence of mutations can be performed respecting the weights. In particular, the 2-cycles which are eliminated in the process of mutation of quivers with potential always have trivial weight.
\item This proves that the weighted quiver $(Q,wt)$ is nondegenerate.
\end{enumerate}

There is no formula for the generic potential $S$. However, we know that every term in $S$ consists of oriented cycles. This proves the following.

\begin{thm}\label{thm: nondegenerate potential gives nondegenerate weight for Q}
Let $Q$ be a quiver without loops or 2-cycles and let $wt$ be a system of weights on $Q$ gives every oriented cycle the trivial weight 1. Then $(Q,wt)$ is nondegenerate.
\end{thm}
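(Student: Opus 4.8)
The plan is to deduce the statement from the mutation theory of quivers with potential together with the existence of a nondegenerate potential. First I would invoke \cite{DWZ} to fix a nondegenerate potential $S$ on $Q$. Every potential --- $S$ itself and every potential obtained from it by a sequence of $\mathrm{QP}$-mutations --- is a (possibly infinite) combination of cyclic words, and a cyclic word is, up to rotation, exactly an oriented cycle in the ambient quiver. Since $wt$ gives every oriented cycle the trivial weight, every term of $S$ has weight $1$; so $(Q,wt)$ together with $S$ is a weighted quiver with potential whose potential has all terms of trivial weight. Note that ``weight $1$'' for a cyclic word is well defined, as it is conjugation invariant.

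Next I would apply the Lemma announced in the introduction, that the essential steps of \cite{DWZ} go through with weights: if a weighted quiver with potential has all potential terms of trivial weight, then it can be mutated at any vertex so that (i) the underlying quiver transformation is exactly Steps 1--3 of weighted quiver mutation, (ii) every oriented $2$-cycle deleted in the reduction has trivial weight, and (iii) the mutated weighted quiver with potential again has all potential terms of trivial weight. Granting this, one feeds $(Q,wt,S)$ into an arbitrary mutation sequence $\mu_{k_t}\circ\dots\circ\mu_{k_1}$ and argues by induction on $t$: by (iii) the hypothesis of the Lemma is maintained at each stage, and because $S$ is nondegenerate the reduced part of the mutated potential never contains a $2$-cycle, so \emph{all} $2$-cycles produced by Steps 1--2 get deleted in the reduction and, by (ii), each has trivial weight. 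Hence Step 3 of weighted quiver mutation always removes every $2$-cycle, the mutation can always be continued, and no oriented $2$-cycle of nontrivial weight is ever produced. That is precisely the assertion that $(Q,wt)$ is nondegenerate.

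The deduction itself is bookkeeping; the substance, and the place I expect the main difficulty, is the Lemma --- in particular checking that the \cite{DWZ} reduction procedure respects weights. There one splits the mutated potential into a ``reduced part'' and a ``trivial part'' of $2$-cycles by means of a formal automorphism $\varphi$ of the complete path algebra, and one must verify that $\varphi$, and the induced splitting, are compatible with the multiplicative structure of $wt$ on paths. The mechanism that makes this work is that every cyclic word manufactured in the course of the reduction is again an oriented cycle in the ambient quiver, hence --- by hypothesis, or by the inductive hypothesis (iii) at later stages --- of trivial weight, so no inconsistency between the local weight rule $wt[ab]=wt(a)\,wt(b)$ and the global constraint can arise. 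The smallest instance is already instructive: for a single mutation of a $2$-cycle-free quiver, the only new $2$-cycles are of the form $[ab]$ paired with an arrow $c\colon j\to i$ of $Q$, and such a $2$-cycle has trivial weight exactly because $abc$ is an oriented $3$-cycle in $Q$; the Lemma is the inductive elaboration of this observation.
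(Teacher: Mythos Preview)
Your proposal is correct and follows essentially the same approach as the paper: invoke the existence of a nondegenerate potential $S$ from \cite{DWZ}, observe that the hypothesis on $wt$ makes $(A,S)$ a weighted quiver with potential, and then use the weighted Splitting Theorem (the ``Lemma'' you cite) together with Proposition~\ref{prop: Q' is partial weight reduction} to conclude that the potential-driven reduction coincides with weight reduction and removes all $2$-cycles. Your identification of the main difficulty --- verifying that the reduction automorphism in \cite{DWZ} can be chosen $G$-graded --- matches exactly what the paper carries out in Sections~2.5 and~3.2.
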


This implies sign-coherence of $c$-vectors which we state as follows.

\begin{cor}\label{cor: sign coherence of c-vectors}
Let $Q$ be a quiver with frozen vertices so that each frozen vertex is a source. Then, no sequence of mutations at unfrozen vertices can produce an arrow between two frozen vertices.
\end{cor}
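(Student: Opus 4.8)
The plan is to deduce the corollary from Theorem~\ref{thm: nondegenerate potential gives nondegenerate weight for Q} by a ``closing-up'' trick: if mutation at unfrozen vertices could ever create an arrow between two frozen vertices, I adjoin to $Q$ in advance one extra, suitably weighted, arrow, so that this new arrow later completes an oriented $2$-cycle of nontrivial weight, contradicting nondegeneracy. (As the framework requires, I assume $Q$ has no loops or $2$-cycles; with all weights trivial, rules (1)--(3) reduce to ordinary quiver mutation, which never gets stuck and never creates loops.)

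Suppose for contradiction that some sequence of mutations at unfrozen vertices applied to $Q$ yields an arrow joining two frozen vertices. Pick such a sequence $\mu_{k_m}\cdots\mu_{k_1}$ of minimal length $m$, and let $i,j$ be frozen vertices joined by an arrow in $Q':=\mu_{k_m}\cdots\mu_{k_1}(Q)$. By minimality, no proper prefix of the sequence produces an arrow between two frozen vertices. Since mutation creates no loops, $i\ne j$, and since $Q'$ has no $2$-cycles, all arrows of $Q'$ between $i$ and $j$ point the same way, say $i\to j$; fix one of them and call it $a$.

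Now enlarge $Q$ to $\widehat Q$ by adjoining a single arrow $b\colon j\to i$, fix any nontrivial group $G$ with a nonidentity element $g$, and weight $\widehat Q$ by $wt(b)=g$ and $wt(a')=1$ for every arrow $a'$ of $Q$. The crucial (and essentially the only nontrivial) point is that $(\widehat Q,wt)$ satisfies the hypotheses of Theorem~\ref{thm: nondegenerate potential gives nondegenerate weight for Q}: it has no loops (because $i\ne j$) and no $2$-cycles (because $j$ is a source, so $Q$ has no arrow $i\to j$), and every oriented cycle has trivial weight---for an oriented cycle through $b$ would have to contain an oriented path of $Q$ ending at the source $j$, which is impossible, so every oriented cycle of $\widehat Q$ uses only arrows of $Q$, all of weight $1$. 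Hence $(\widehat Q,wt)$ is nondegenerate.

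Finally I run $\mu_{k_1},\dots,\mu_{k_m}$ on $(\widehat Q,wt)$. Since $b$ joins two frozen vertices while each $k_\ell$ is unfrozen, $b$ is never an input to Step~1 or Step~2, and the arrows inherited from $Q$ keep trivial weight (weights only ever get multiplied or inverted); by minimality of $m$ no arrow between $i$ and $j$ exists before the last step, so $b$ lies in no $2$-cycle until then and hence obstructs nothing, so all $m$ mutations are legitimate and at each stage the quiver is exactly the one obtained from $Q$ with $b$ adjoined. After $\mu_{k_m}$ we see the arrow $a\colon i\to j$ of $Q'$, of weight $1$, together with $b\colon j\to i$, of weight $g$; since $wt(a)\,wt(b)=g\ne 1$ this $2$-cycle is not removed in Step~3, so $(\widehat Q,wt)$ is not nondegenerate---a contradiction. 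I expect the only real obstacle to be this last bookkeeping step: checking that $b$ is never cancelled (which is where $g\ne 1$ is used) and never blocks an intermediate mutation (which is where the minimality of $m$ is used); the rest is immediate.
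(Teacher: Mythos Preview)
Your proof is correct and follows essentially the same ``closing-up'' approach as the paper: add one nontrivially weighted arrow $b:j\to i$ between the two frozen vertices, invoke Theorem~\ref{thm: nondegenerate potential gives nondegenerate weight for Q}, and obtain a forbidden $2$-cycle of nontrivial weight. You are in fact more careful than the paper's own proof, supplying the minimality argument to guarantee that $b$ never obstructs an intermediate mutation---a detail the paper glosses over.
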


\begin{proof}
Suppose there is a sequence of mutations on $Q$ producing an arrow between frozen vertices $i\to j$. Let $Q'$ be the quiver obtained from the original quiver $Q$ by adding one arrow $a:j\to i$. Give weights to $Q'$ as follows. All arrows have weight 1 except for the new arrow $a$ which has nontrivial weight. Since $a$ is not part of any oriented cycle, $(Q',wt)$ is nondegenerate. However, the given sequence of mutations produces an arrow $b:i\to j$ of weight 1. Together with $a:i\to j$ we get an oriented 2-cycle of nontrivial weight which contradicts the assumption that $(Q',wt)$ is nondegenerate.
\end{proof}

\begin{rem}\label{rem: equivalent wording of sign coherence}
The standard wording of the sign coherence of $c$-vectors is a special case of Corollary \ref{cor: sign coherence of c-vectors}. If $1',2',\cdots,n'$ are the frozen vertices of $Q$ and $k$ is a not frozen vertex of $Q$ then the $c$-vector of $k$ is defined to be the integer vector $c_k\in\ZZ^n$ with coordinates $c_{kj}$ equal to the number of arrow from $k$ to the $j$th frozen vertex $j'$ minus the number of arrows from $j'$ to $k$. The vector $c_k$ is \emph{sign coherent} if its entries are either all nonnegative or all nonpositive. One version of the sign coherence conjecture says that, if the frozen vertices are all sources, then any sequence of mutations at not frozen vertices will keep the $c$-vectors sign coherent. If this is violated, then there will be a $c$-vector $c_k$ with two entries, say $c_{ki}$ and $c_{kj}$ of opposite sign. But, if this happens, mutation at vertex $k$ will produce an arrow between frozen vertices $i'$ and $j'$ violating Corollary \ref{cor: sign coherence of c-vectors}. Conversely, any violation of Corollary \ref{cor: sign coherence of c-vectors} must be preceded by such a violation of sign coherence of $c$-vectors. So, the statements are equivalent.
\end{rem}

As another application, we give a classification of all nondegenerate weights on all tame quivers, i.e., those mutation equivalent to tame acyclic quivers.

\section{Quivers with potentials and weights}

In this section we go over the definitions and proofs in \cite{DWZ} to check that they work without any problems when weights are added. We use the same notation as in \cite{DWZ} with the exception that we compose arrow from left to right and we work with a fixed field $K=\CC$.

\subsection{Quivers with weights}

Let $Q=(Q_0,Q_1,wt)$ be a quiver with finite \emph{vertex set}
\[
	Q_0=\{1,2,\cdots,n\}
\]
finite \emph{arrow set} $Q_1$ and \emph{weight} function
\[
	wt:Q_1\to G
\]
where $G$ is a fixed group.

We work over the field of complex numbers $\CC$. The \emph{vertex span} of $Q$ is $R=\CC^{Q_0}$, the vector space spanned by the vertices of $Q$. This is a semi-simple algebra over $\CC$ with idempotents $e_i$, $i=1,\cdots,n$.

The \emph{arrow span} of $Q$ is $A=\CC^{Q_1}$. This decomposes as
\begin{gather}
	A=\bigoplus_{g\in G} A_g
\end{gather}
where $A_g$ is the span of all arrows of $Q$ of weight $g$. $A$ is an $R$-bimodule. We use the topologist convention of composing arrows left to right. So, $e_iAe_j$ is the span of all arrows $i\to j$. For each $g\in G$, $A_g$ is a sub-bimodule of $A$. By a \emph{homogeneous basis} for $A$ we mean a subset $B$ of $A$ so that $B=\coprod B_g^{ij}$ where $B_g^{ij}=B\cap e_iA_ge_j$ is a basis for $e_iA_ge_j$ for every $i,j\in Q_0$, $g\in G$.

The \emph{dual} of $Q$ is given by reversing the direction of all arrows and inverting the weights.

For every $d\ge0$, $A^d=A\otimes_R A\otimes_R\cdots\otimes_RA$ is the span of all paths of length $d$ in $Q$. Thus, $A^0=R$ and $A^d_g$ is the sub-bimodule of all paths with weight $g$ where the \emph{weight of a path} is defined to be the product of the weights of the arrows:
\[
	wt(a_1\otimes a_2\otimes\cdots\otimes a_d)= wt(a_1)wt(a_2)\cdots wt(a_d)
\]
and $wt(e_i)=1$ for all $i$. We often suppress the tensor product symbol and write $a_1a_2\cdots a_d$.

The \emph{path algebra} of $Q$ is the direct sum of all $A^d$:
\[
	R\left<A\right>=\bigoplus_{d\ge0} A^d
\]
This is a $G$-graded algebra where $R\left<A\right>_g=\bigoplus_{d\ge0} A^d_g$ for all $g\in G$. The \emph{completed path algebra} of $Q$ is denoted
\[
	R\left<\left<A\right>\right>=\prod_{d\ge0} A^d
\]
This is also $G$-graded: $R\left<\left<A\right>\right>_g=\prod_{d\ge0} A^d_g$. The \emph{radical} of $R\left<\left<A\right>\right>$ is $\frak m=\prod_{d\ge1} A^d$. Then $R\left<\left<A\right>\right>/\frak m=R$. The radical is $G$-graded: $\frak m=\bigoplus \frak m_g$ where $\frak m_g=\prod_{d\ge1}A^d_g$.

\subsection{Automorphisms of quivers with weights}

When $Q,Q'$ are quivers with the same vertex set and arrow spans $A,A'$, we consider $G$-graded $\CC$-algebra homomorphisms $\varphi:R\left<\left<A\right>\right>\to R\left<\left<A'\right>\right>$ which are the identity on $R$. Thus $\varphi(A_g)\subseteq R\left<\left<A'\right>\right>_g$ for all $g\in G$. In fact, $\varphi$ is uniquely determined by $R$-bimodule morphisms
\[
	\varphi_g:A_g\to \frak m_g'\subset R\left<\left<A'\right>\right>_g
\]
which are arbitrary where $\frak m'$ is the radical of $R\left<\left<A'\right>\right>$ and $\frak m_g'=\frak m'\cap R\left<\left<A'\right>\right>_g$. Note that an arbitrary $R$-bilinear map $A_g\to \frak m_g'$ is given by a collection of linear maps:
\[
	\varphi_g^{ij}:A_g^{ij}\to (\frak m'_g)^{ij}
\]
for all $i,j\in Q_0,g\in G$.

Following \cite{DWZ} we call an automorphism $\varphi$ of $R\left<\left<A\right>\right>$ a \emph{change of arrows} if $\varphi(A_g^{ij})= A_g^{ij}$, i.e., $\varphi$ is given by a linear change of coordinates of each $A_g^{ij}$. If $\varphi$ is the identity modulo $\frak m^2$, we call it \emph{unitriangular}.

\subsection{Potentials}

For $d\ge 1$, the \emph{cyclic part} of $A^d$, denoted $A^d_{cyc}$, is defined to be the sub-$R$-bimodule of $A^d_1$ (the weight 1 part of $A^d$) spanned by oriented cycles of length $d$.
\[
	R\left<\left<A\right>\right>_{cyc}=\prod_{d\ge1} A^d_{cyc}
\]
The concept of \emph{cyclic equivalence} is defined as before. Thus $a b$ and $ba$ are cyclically equivalent if $a,b,ab,ba$ are paths. Elements of $R\left<\left<A\right>\right>_{cyc}$ (including 0) are called \emph{potentials}. 

It is clear that every $G$-graded algebra homomorphism $\varphi:R\left<\left<A\right>\right>\to R\left<\left<A'\right>\right>$ sends potentials to potentials.

\subsection{Weighted quivers with potential}

A \emph{weighted quiver with potential} (wQP), denoted $(A,S)$ is a weighted quiver $Q$ without loops, and a potential $S\in R\left<\left<A\right>\right>_{cyc}$ no two terms of which are cyclically equivalent up to a scalar. I.e., $S$ is a (possibly infinite) linear combination of oriented cycles of \emph{weight 1} no two of which are cyclically equivalent.

Two wQPs $(A,S),(A',S')$ are \emph{right-equivalent} if there is a $G$-graded algebra isomorphism $\varphi:R\left<\left<A\right>\right>\to R\left<\left<A'\right>\right>$ so that $\varphi(S)$ is cyclically equivalent to $S'$. 

\begin{defn}
A wQP $(A,S)$ is called \emph{trivial} if $S\in A^2_{cyc}$ and there exists a homogeneous basis $\{a_1,\cdots,a_m,b_1,\cdots,b_m\}$ of $A$ so that $S$ is cyclically equivalent to $\sum a_ib_i$.
\end{defn}

Since $Q$ has no loops, every arrow $a:i\to j$ goes \emph{forward} ($i<j$) or \emph{backwards} ($i>j$). Any homogeneous basis for $A$ consists of forward arrows $a_p$ and backward arrows $b_q$. Then any $S\in A^2_{cyc}$ is cyclically equivalent to a unique $S'$ of the form
\[
	S'=\sum c_{pq}a_pb_q
\]
We call $S'$ the \emph{forward-backward} form of $S$.
\begin{prop}
$(A,S)$ is trivial if and only if $(c_{pq})$ is an invertible square matrix.
\end{prop}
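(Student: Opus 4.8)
The plan is to prove both directions by analyzing the forward–backward form $S' = \sum_{p,q} c_{pq} a_p b_q$, where $\{a_p\}$ are the forward arrows and $\{b_q\}$ are the backward arrows of a homogeneous basis of $A$. Note first that for a term $a_p b_q$ to be an oriented cycle (hence to actually appear with nonzero coefficient among the cyclic length-two paths), we need $a_p : i \to j$ and $b_q : j \to i$, so the index sets of forward and backward arrows must have the same cardinality $m$ and $(c_{pq})$ is automatically a square $m \times m$ matrix; moreover homogeneity forces $wt(a_p)\,wt(b_q) = 1$ whenever $c_{pq} \neq 0$, which is exactly the condition that each surviving term has weight $1$.

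For the ``if'' direction, suppose $(c_{pq})$ is invertible. I would perform a change of arrows (in the sense defined above): replace the basis vectors $b_q$ by $b_q' = \sum_p c_{pq}' b_q$ for a suitable invertible matrix — concretely, set $b_p' = \sum_q c_{pq} b_q$, so that $\sum_p a_p b_p'$ is cyclically equivalent to $S'$. The key point is that this is a legitimate $G$-graded change of arrows: since $c_{pq} \neq 0$ only when $wt(b_q) = wt(a_p)^{-1}$, all backward arrows $b_q$ contributing to a fixed $b_p'$ have the same weight, so $b_p'$ is homogeneous of that weight and lies in the correct graded piece $e_j A e_i$. Hence $\{a_1,\dots,a_m,b_1',\dots,b_m'\}$ is again a homogeneous basis, and with respect to it $S$ is cyclically equivalent to $\sum a_p b_p'$, so $(A,S)$ is trivial by definition.

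For the ``only if'' direction, suppose $(A,S)$ is trivial, so there is a homogeneous basis $\{\tilde a_1,\dots,\tilde a_r, \tilde b_1,\dots,\tilde b_r\}$ with $S$ cyclically equivalent to $\sum_{i=1}^r \tilde a_i \tilde b_i$. First I would argue that the $\tilde a_i$ can be taken to be the forward arrows and $\tilde b_i$ the backward arrows in this basis (or rather: the cyclic relation only pairs a forward arrow with a backward one, and by splitting each basis element into its forward/backward parts one may arrange this without loss of generality; in particular $r = m$). Then the change of basis from $\{a_p, b_q\}$ to $\{\tilde a_i, \tilde b_i\}$ is block-triangular with respect to the forward/backward decomposition — call the blocks $P$ (on forward arrows) and $R$ (on backward arrows), both invertible. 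Writing $S'$ in the new basis and matching against $\sum \tilde a_i \tilde b_i$, the coefficient matrix transforms as $P^{-1}(c_{pq})R^{-\top}$ (up to transpose/indexing conventions), and triviality says this equals the identity matrix. Since $P$ and $R$ are invertible, $(c_{pq})$ is invertible.

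The main obstacle I expect is the bookkeeping in the ``only if'' direction: justifying that a general homogeneous basis exhibiting triviality can be replaced by one respecting the forward/backward decomposition, and tracking the weight constraints through the change of coordinates so that the change-of-basis matrices really are $G$-graded-admissible (i.e. block-diagonal by weight within the forward block and within the backward block). Once it is observed that cyclic equivalence only ever pairs arrows $i \to j$ with arrows $j \to i$ of inverse weight, this decouples into finitely many purely linear-algebra statements — one for each pair $(i,j)$ with $i < j$ and each weight $g$ — and the proposition reduces to the elementary fact that $\sum c_{pq} x_p y_q$ is equivalent to a sum of products of $2m$ independent linear forms iff $(c_{pq})$ is invertible.
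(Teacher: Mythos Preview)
Your argument is correct and follows essentially the same line as the paper's: both reduce to the observation that $(c_{pq})$ is block-diagonal with one block for each triple $(i,j,g)$ with $i<j$, so that triviality is equivalent to each block being invertible; the paper states this decomposition in one sentence, while you reach it after working through explicit change-of-basis matrices. One small slip worth fixing: your opening claim that $(c_{pq})$ is ``automatically'' square is not right---the constraint that $c_{pq}\neq 0$ only when $a_p,b_q$ have matching endpoints and inverse weight restricts the nonzero pattern, not the shape of the matrix (two arrows $1\to 2$ and one arrow $2\to 1$ give a $2\times 1$ matrix); squareness is assumed in the ``if'' direction and, in the ``only if'' direction, follows from the definition of trivial exactly as you later argue.
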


\begin{proof}
A homogeneous basis $B$ consists of bases $B_g^{ij}$ for $e_i A_ge_j$. Thus $(c_{pq})$ is a block diagonal matrix with one block for every $(i,j,g)$ with $i<j$. On one block, $(a_p)$ is a basis for $e_iA_ge_j$ and $(b_q)$ is a basis for $e_jA_{g^{-1}}e_i$. The potential is trivial if each block can be transformed into an identity matrix. This is equivalent to each block of $(c_{pq})$ being invertible which is equivalent to $(c_{pq})$ being invertible.
\end{proof}

\subsection{Splitting Theorem}

This theorem states that, for any wQP $(A,S)$, there is a decomposition
\[
	(A,S)\cong (A_{triv},S_{triv})\oplus (A_{red},S_{red})
\]
where $(A_{triv},S_{triv})$ is trivial and $(A_{red},S_{red})$ is \emph{reduced} which means $S_{red}\in\frak m^3$. The first step is to identify $A_{triv}$ and $S_{triv}$.

Given a wQP $(A,S)$, let $S_{triv}=S^{(2)}$ be the degree $2$ part of $S$. Let
\[
	S_g^{ij}\in e_iA_ge_j\otimes e_jA_{g^{-1}}e_i
\]
be the cyclically equivalent forward-backward $(i,j,g)$ component of $S^{(2)}$. Then $S_g^{ij}$ gives linear maps:
\[
	 (e_jA_{g^{-1}}e_i)^\ast\to e_iA_ge_j
\]
\[
	(e_iA_ge_j)^\ast\to  e_jA_{g^{-1}}e_i
\]
Let $V_g^{ij}\subseteq e_iA_ge_j$ and $V_{g^{-1}}^{ji}\subseteq e_jA_{g^{-1}}e_i$ be the images of these two linear maps. Then 
\[
	A_{triv}=\bigoplus_{i<j,g\in G} V_g^{ij}\oplus V_{g^{-1}}^{ji}
\]
This is independent of any choice of homogeneous basis for $A$. The following lemma, with proof left to the reader, implies that $(A_{triv},S_{triv})\subseteq (A,S)$ is invariant under $G$-graded automorphisms $\varphi$ of $R\left<\left<A\right>\right>$ in the sense that $(\varphi(A_{triv}),\varphi(S_{triv}))$ is the trivial part of $(A,\varphi(S))$.

\begin{lem}
Let $\varphi$ be a $G$-graded automorphism of $R\left<\left<A\right>\right>$ over $R$. Let $\varphi^{(1)}$ be the induced linear automorphism of $A=\frak m/\frak m^2$. Then 
\[
\varphi(S)^{(2)}=\left(\varphi^{(1)}\otimes \varphi^{(1)}\right)(S^{(2)}),
\]
i.e., the degree $2$ part of $\varphi(S)$ is the image of $S^{(2)}\in A_{cyc}^2\subset A^2$ under $\varphi^{(1)}\otimes \varphi^{(1)}:A^2\to A^2$.
\end{lem}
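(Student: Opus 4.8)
The plan is to reduce the statement to the degree-$2$ truncation of everything in sight. Write $\varphi = \mathrm{id} + \psi$ where $\psi$ raises the $\frak m$-adic degree by at least $1$ (this is the content of $\varphi$ being the identity on $R$ and determined by maps $A_g \to \frak m_g'$, here with $A'=A$). The composite $\varphi$ is then built from $\psi$ by the usual expansion, and on any element of $\frak m^2$ the leading (degree-$2$) term of $\varphi$ applied to it depends only on the degree-$1$ part of $\psi$ on each tensor factor; all other contributions land in $\frak m^3$. Since $S \in R\langle\langle A\rangle\rangle_{cyc}$ with $S^{(2)} \in A^2_{cyc} \subset A^2$, applying $\varphi$ and truncating to degree $2$ should give exactly $(\varphi^{(1)}\otimes\varphi^{(1)})(S^{(2)})$.

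First I would make precise the map $\varphi^{(1)}$: since $\varphi$ is a $G$-graded $R$-algebra homomorphism, it preserves $\frak m$ and hence induces a linear map on $\frak m/\frak m^2 \cong A$; because $\varphi(A_g) \subseteq R\langle\langle A\rangle\rangle_g$, this induced map respects the $(i,j,g)$-decomposition, so $\varphi^{(1)}$ is a well-defined $G$-graded $R$-bimodule automorphism of $A$, and it is an automorphism because $\varphi$ is. Concretely, for $a \in A_g^{ij}$, $\varphi(a) = \varphi^{(1)}(a) + (\text{terms in } \frak m_g^{\ge 2})$.

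Next I would compute $\varphi(S)^{(2)}$ directly. Write $S = S^{(2)} + S'$ with $S' \in \frak m^3$. Since $\varphi$ preserves $\frak m$, $\varphi(S') \in \frak m^3$ contributes nothing in degree $2$. For $S^{(2)}$, express it in terms of a homogeneous basis, say $S^{(2)} = \sum c\, a\otimes b$ with $a,b$ basis arrows; then $\varphi(a\otimes b) = \varphi(a)\varphi(b) = (\varphi^{(1)}(a) + \cdots)(\varphi^{(1)}(b) + \cdots)$, and multiplying out, the only contribution in degree $2$ is $\varphi^{(1)}(a)\otimes\varphi^{(1)}(b)$, since each omitted term already has degree $\ge 2$ in at least one factor, making the product degree $\ge 3$. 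Summing, $\varphi(S)^{(2)} = \sum c\, \varphi^{(1)}(a)\otimes\varphi^{(1)}(b) = (\varphi^{(1)}\otimes\varphi^{(1)})(S^{(2)})$, as claimed. Finally I would note that $\varphi^{(1)}\otimes\varphi^{(1)}$ indeed maps $A^2_{cyc}$ into $A^2_{cyc}$: it preserves $A^2_1$ (the weight-$1$ part, by $G$-gradedness) and it preserves the span of oriented cycles because $\varphi^{(1)}$ respects the $e_iAe_j$ pieces.

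The only mild subtlety — not really an obstacle — is bookkeeping the $\frak m$-adic filtration carefully enough to be sure every discarded term lands in $\frak m^3$; this is where one uses that $\psi = \varphi - \mathrm{id}$ strictly raises degree on $A$, so products of a degree-$1$ and a degree-$\ge 2$ piece, or of two degree-$\ge 2$ pieces, are all in $\frak m^3$. Everything else is formal manipulation in the completed path algebra.
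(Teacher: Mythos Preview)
Your argument is correct in substance, and in fact the paper leaves this proof to the reader, so there is nothing to compare against; your third paragraph is exactly the computation one would expect.

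One small inaccuracy worth cleaning up: your opening decomposition $\varphi=\mathrm{id}+\psi$ with $\psi$ raising $\frak m$-adic degree is only valid for unitriangular automorphisms, not for a general $G$-graded automorphism over $R$. A change of arrows, for instance, has $\varphi^{(1)}\neq\mathrm{id}$, so $\varphi-\mathrm{id}$ need not land in $\frak m^2$ on $A$. Fortunately your actual computation does not use this; it uses the correct decomposition $\varphi(a)=\varphi^{(1)}(a)+(\text{terms in }\frak m^{\ge 2})$, which holds by definition of $\varphi^{(1)}$ as the induced map on $\frak m/\frak m^2$. Just replace the references to $\psi=\varphi-\mathrm{id}$ in your first and last paragraphs with this decomposition and the bookkeeping goes through verbatim: each omitted term in $\varphi(a)\varphi(b)$ has at least one factor in $\frak m^2$, hence the product lies in $\frak m^3$.
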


Let $A_{red}=A/A_{triv}$. This has a decomposition:
\[
	A_{red}=\bigoplus_{i,j,g} e_iA_ge_j/V_g^{ij}
\]

\begin{prop}
Choose a vector space complement $W_g^{ij}$ for $V_g^{ij}$ in $e_iA_ge_j$ for every $i,j,g$. Then there exists a $G$-graded unitriangular automorphism $\varphi$ of $R\left<\left<A\right>\right>$ so that $\varphi(A,S)$ is cyclically equivalent to $(A_{triv},S_{triv})\oplus (A_{red},S_{red})$.
\end{prop}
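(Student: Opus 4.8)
The plan is to mimic the proof of the Splitting Theorem from \cite{DWZ}, checking at each stage that everything can be done $G$-graded. I would proceed by successive approximation, building up the automorphism $\varphi$ as an infinite composite of unitriangular automorphisms that kill the ``new'' degree-$2$-type interference in higher and higher degrees.

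First I would set up the starting point. Write $S = S^{(2)} + S^{(\ge 3)}$ where $S^{(2)} = S_{triv}$. Having chosen the complements $W_g^{ij}$, we identify $A = A_{triv} \oplus A_{red}$ as $R$-bimodules, where $A_{triv} = \bigoplus V_g^{ij} \oplus V_{g^{-1}}^{ji}$ is spanned by a homogeneous basis $a_1,\dots,a_m,b_1,\dots,b_m$ and, by the Proposition on triviality, $S^{(2)}$ is cyclically equivalent to $\sum a_k b_k$ after a preliminary $G$-graded change of arrows (which respects the grading since it acts within each $e_iA_ge_j$). The goal is to produce a $G$-graded unitriangular $\varphi$ with $\varphi(S)$ cyclically equivalent to $\sum a_k b_k + S'$ where $S' \in R\langle\langle A_{red}\rangle\rangle_{cyc}$ — i.e.\ all terms involving any $a_k$ or $b_k$ have been absorbed into the trivial quadratic part.

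The main step is the inductive ``absorption'' construction. Suppose we have $\varphi_N$ unitriangular and $G$-graded with $\varphi_N(S)$ cyclically equivalent to $\sum a_k b_k + T_N + (\text{terms of degree} \ge N+1 \text{ involving some } a_k \text{ or } b_k)$, where $T_N \in R\langle\langle A_{red}\rangle\rangle_{cyc}$. Collect the degree-$(N+1)$ offending terms; each is cyclically equivalent to something of the form $\sum_k a_k c_k + \sum_k b_k d_k$ with $c_k, d_k$ paths of length $N \ge 2$, and — this is the point where the weight hypothesis is used — since each summand $a_k c_k$ (resp.\ $b_k d_k$) is an oriented cycle of weight $1$, and $a_k$ has weight $g$ (resp.\ $g^{-1}$), the tail $c_k$ automatically has weight $g^{-1}$ (resp.\ $d_k$ has weight $g$). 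Hence the substitution $a_k \mapsto a_k - d_k$, $b_k \mapsto b_k - c_k$ (identity on $A_{red}$) is a $G$-graded $R$-bimodule map, extends to a $G$-graded unitriangular automorphism $\rho_{N+1}$, and $\rho_{N+1}(\sum a_k b_k) = \sum a_k b_k - \sum a_k c_k - \sum b_k d_k + (\text{degree} \ge N+1)$, which cancels the offending degree-$(N+1)$ terms modulo degree $\ge N+2$. Set $\varphi_{N+1} = \varphi_N \circ \rho_{N+1}$. Since each $\rho_{N+1}$ is the identity modulo $\frak m^{N}$, the infinite composite $\varphi = \lim \varphi_N$ converges in $R\langle\langle A\rangle\rangle$, is $G$-graded and unitriangular, and $\varphi(S)$ is cyclically equivalent to $\sum a_k b_k + S_{red}$ with $S_{red} \in R\langle\langle A_{red}\rangle\rangle_{cyc}$; restricting to $A_{triv}$ and $A_{red}$ gives the claimed direct sum decomposition.

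The main obstacle — or rather the thing that must be checked carefully rather than being genuinely hard — is exactly the grading bookkeeping in the inductive step: verifying that when we split an offending cycle as $a_k c_k$, the complementary path $c_k$ lands in the correct graded piece $A_{g^{-1}}$ so that the correction map $b_k \mapsto b_k - c_k$ is honestly $G$-graded and $R$-bilinear, and that cyclic equivalence interacts well with the grading (which it does, since cyclically equivalent cycles have the same weight). Everything else — convergence of the composite in the $\frak m$-adic topology, the fact that $G$-graded automorphisms send potentials to potentials (already noted in the excerpt), and the reduction to $S^{(2)} = \sum a_k b_k$ via the triviality Proposition — is routine and parallels \cite{DWZ} verbatim once one observes each individual move preserves the weight decomposition $A = \bigoplus_g A_g$.
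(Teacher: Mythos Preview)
Your proposal is correct and follows essentially the same approach as the paper: split $A=V\oplus W$, normalize $S^{(2)}$ to $\sum a_pb_p$, then iteratively apply the unitriangular substitution $a_p\mapsto a_p-v_p$, $b_p\mapsto b_p-u_p$ (your $d_k,c_k$) to push the offending terms into higher and higher degree, with the key observation that $u_p,v_p$ automatically have the correct weights $g^{-1},g$ because each cycle in $S$ has weight~$1$. The paper's write-up is terser---it does one step and says ``repeating this procedure''---while you spell out the degree-by-degree induction and the $\frak m$-adic convergence, but the argument is the same.
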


\begin{proof} Let $V=A_{triv}=\bigoplus V_g^{ij}$ and let $W=\bigoplus W_g^{ij}\cong A_{red}$. For all $g\in G$ and $i<j$, choose a basis $(a_p)$ for $V_g^{ij}$ and a dual basis $(b_p)$ for $V_{g^{-1}}^{ji}$ so that $S^{(2)}=\sum a_pb_p$. Then, up to cyclic equivalence we have:
\[
	S=\sum (a_pb_p+a_pu_p+v_pb_p)+S'
\]
where $S'\in R\left<\left<W\right>\right>_{cyc}$. Then $u_p,v_p$ have degree $\ge2$ and weight $g^{-1},g$ respectively.

Let $\varphi$ be the unitriangular $G$-graded automorphism of $R\left<\left<A\right>\right>$ given by $\varphi(a_p)=a_p-v_p$, $\varphi(b_p)=b_p-u_p$ and $\varphi|W=id_W$. Then
\[
	\varphi(S)=\sum a_pb_p+S'+\text{terms of degree} \ge4
\]
Repeating this procedure, terms of increasingly higher degrees are added to $\varphi(S)$. So, $\varphi(S)$ will converge to $S_{triv}=\sum a_pb_p\in R\left<\left<V\right>\right>_{cyc}$ plus a reduced element of $R\left<\left<W\right>\right>_{cyc}$.
\end{proof}

\section{Mutation of weighted quivers with potential}

We review the definition of mutation of quivers with potential and show that the mutation process respects the weights, i.e., every term in the new potential can be chosen to have trivial weight.

\subsection{Mutation of weighted quivers} The definition of mutation of a weighted quiver was given in the introduction. We review this with additional notation and an example.

Let $(Q,wt)$ be a weighted quiver having no loops or oriented 2-cycles. Let $k$ be a vertex of $Q$. We define $\tilde\mu_k(Q,wt)=(\tilde Q,\tilde{wt})$ to be the weighted quiver obtained from $Q$ by adding an arrow $[ab]$ from $i$ to $j$ for any pair of composable arrows $a:i\to k$, $b:k\to j$ with target $k$ and source $k$, then reversing the orientation of all arrows going to and from $k$. The reversed arrow are indicated by an asterisk: $a^\ast:k\to i$, $b^\ast:j\to k$. The weights of these new arrows are given by $wt([ab])=wt(a)wt(b)$ and $wt(x^\ast)=wt(x)^{-1}$.

The new arrows $[ab]:i\to j$ may form oriented 2-cycles with existing arrows $c:j\to i$. If these exists such a $c$ with $wt(c)=wt([ab])^{-1}$, the pair of arrows $[ab], c$ is eliminated. Repeating as often as possible we obtain the weighted quiver $\mu_k(Q,wt)$ which has no oriented 2-cycles of trivial weight. We call such a quiver \emph{weight reduced} and we call the procedure, going from $(\tilde Q,\tilde{wt})$ to $\mu_k(Q,wt)$, \emph{weight reduction}.

An example is given in Figure~\ref{Fig1}. The first quiver in Fig~\ref{Fig1} is $Q$, the quiver with vertices $1,2,3$ and four arrows $a:1\to 2$, $b:2\to 3$ and $c,d:3\to 1$. The second quiver is $\tilde Q$. This has a new arrow $[ab]:1\to 3$ and reverses the arrows $a,b$ producing $a^\ast:2\to 1$ and $b^\ast:3\to 2$.

\begin{figure}[htbp]
\begin{center}
\[
\xymatrix{
&
	2\ar[dr]^b &&&&  & 2\ar[ld]_{a^\ast} &&&&  & 2\ar[ld]_{a^\ast} & \\
1 \ar[ru]^a&& 3 \ar[ll]_c\ar@/^1pc/[ll]_d &\ \ar[r]^{\mu_2}&\ &1 \ar@/^1pc/[rr]^{[ab]}&& 3 \ar[ll]_c\ar@/^1pc/[ll]_d\ar[lu]_{b^\ast}&\ \ar[r]^{?}&\ &1 && 3\ar[ll]_d\ar[lu]_{b^\ast}
	}
\]
\caption{$wt([ab])=wt(a)wt(b)$, $wt(a^\ast)=wt(a)^{-1}$, $wt(b^\ast)=wt(b)^{-1}$. The 2-cycle $[ab]c$ in the middle can be cancelled if and only if $wt(c)=wt([ab])^{-1}$.}
\label{Fig1}
\end{center}
\end{figure}
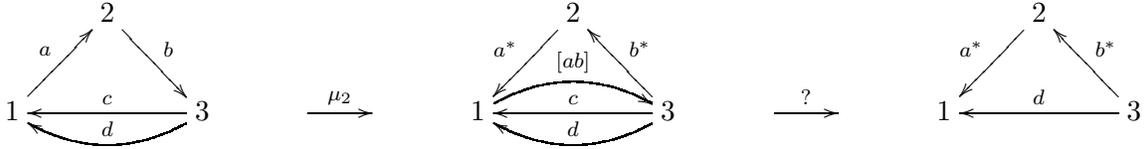

The {weight reduction step} is to eliminate any 2-cycles of trivial weight. If the weight of either $c$ or $d$ is inverse to $wt([ab])=wt(a)wt(b)$ then that arrow can be paired with $[ab]$ and eliminated. The final output is $\mu_k(Q,wt)$, the {weight reduction} of $(\tilde Q,\tilde{wt})$ which is shown in Fig~\ref{Fig1} assuming $wt([ab]c)=1$. 

\begin{defn}
A weighted quiver is call \emph{nondegenerate} if it has no loops or 2-cycles and, after any finite sequence of mutations, the mutated weighted quiver has no loops or 2-cycles.
\end{defn}

\subsection{Mutation of weighted quivers with potential} For a weighted quiver with potential, we will use the {mutation process dictated by the potential} which may not agree with the mutation process for weighted quivers without potential as outlined above. The difference is in the last step. To emphasize this we call the last step \emph{potential reduction}. This does not always agree with weight reduction.

For example, in Figure \ref{Fig1}, if we use the potential $S=abc$ then $[ab]c$ has trivial weight and would be eliminated by either weight reduction or potential reduction. However, if $S=0$, the 2-cycle $[ab]c$ cannot be eliminated by potential reduction even if it has trivial weight.

\begin{defn}
Let $(A,S)$ be a wQP with underlying weighted quiver $(Q,wt)$. Suppose $Q$ has loops or 2-cycles. Let $k\in Q_0$. Then the mutated weighted quiver with potential $(A',S')=\mu_k(A,S)$  with underlying weighted quiver $(Q',wt')$ (which is not necessarily equal to $\mu_k(Q,wt)$) is given in three steps. 
\begin{enumerate}
\item Construct the weighted quiver $\tilde\mu_k(Q,wt)=(\tilde Q,\tilde{wt})$ as described by the first two steps of the mutation process for weighted quivers.
\item Let $\tilde \mu_k(A,S)=(\tilde A,\tilde S)$ where:
\begin{enumerate}
	\item $\tilde A=\bigoplus_{g\in G}\tilde A_g$ where 
	\[
	\tilde A_g=(1-e_k)A_g(1-e_k)\oplus e_k(A_{g^{-1}})^\ast\oplus (A_{g^{-1}})^\ast e_k\oplus \bigoplus_{xy=g}A_xe_kA_y
	\]
	\item $\tilde S=[S]+\Delta_k$ where $[S]$ is $S$, expressed as an (infinite) linear combination of paths not starting or ending at $k$, with all pairs of consecutive arrows $a,b$ in the path passing through $k$ being replaced by $[ab]\in \tilde A$ and
	\[
		\Delta_k\in \bigoplus_{x,y\in G}  e_k(A_x)^\ast\otimes_R A_x e_k A_y\otimes_R (A_y)^\ast e_k=\bigoplus_{x,y\in G} \Hom_{R\otimes R^{op}}(A_xe_k\otimes e_kA_y,A_xe_k A_y)
	\]
	is the sum of terms corresponding to the canonical $R$-bimodule isomorphisms $A_xe_k\otimes e_kA_y\cong A_xe_k A_y$. (Each term is a sum of 3-cycles in $\tilde A$ of trivial weight.)
\end{enumerate}
\item Since every term in $\tilde S$ has trivial weight, the wQP $(\tilde A,\tilde S)$ decomposes as $(\tilde A_{triv},\tilde S_{triv})\oplus (\tilde A_{red},\tilde S_{red})$. Let:
\[
	\mu_k(A,S)=(\tilde A_{red},\tilde S_{red})
\]
with underlying weighted quiver $(Q',wt')$.
\end{enumerate}
\end{defn}

\begin{prop}\label{prop: Q' is partial weight reduction}
$\mu_k(Q,wt)$ is the weight reduction of $(Q',wt')$. So, $(Q',wt')=\mu_k(Q,wt)$ in the special case when $Q'$ has no oriented cycles.
\end{prop}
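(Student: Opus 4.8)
The plan is to compare the two reduction procedures directly. Both $\mu_k(A,S)$ and $\mu_k(Q,wt)$ are built from the same intermediate object $\tilde\mu_k(Q,wt)=(\tilde Q,\tilde{wt})$; the only difference is that $\mu_k(A,S)$ passes to the reduced part $(\tilde A_{red},\tilde S_{red})$ dictated by the potential, whereas $\mu_k(Q,wt)$ is obtained by the naive weight reduction, cancelling \emph{all} oriented $2$-cycles of trivial weight. So what must be shown is: (i) the quiver $Q'$ underlying $(\tilde A_{red},\tilde S_{red})$ still has the same ``net'' arrows as $\tilde Q$ away from $2$-cycles, and can differ from $\mu_k(Q,wt)$ only by extra oriented $2$-cycles of trivial weight that $\tilde S_{red}$ simply fails to cancel; and (ii) performing weight reduction on $Q'$ produces exactly $\mu_k(Q,wt)$.

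First I would recall from the Splitting Theorem section exactly what $\tilde A_{triv}$ is: it is $\bigoplus_{i<j,g} V_g^{ij}\oplus V_{g^{-1}}^{ji}$, where $V_g^{ij}\subseteq e_i\tilde A_g e_j$ is the image of the linear map determined by the degree-$2$ part $\tilde S^{(2)}$. The key structural point is that the degree-$2$ part of $\tilde S$ consists only of the forward-backward $2$-cycles coming from $[S]$ (the $\Delta_k$ terms are $3$-cycles). Hence $\tilde A_{triv}$ lives entirely inside the subspace of $\tilde A$ spanned by pairs of arrows forming oriented $2$-cycles in $\tilde Q$, and in each homogeneous block $e_i\tilde A_g e_j$ the subspace $V_g^{ij}$ removed is matched, via an invertible pairing, with an equal-dimensional subspace $V_{g^{-1}}^{ji}$ of $e_j\tilde A_{g^{-1}}e_i$. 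Therefore passing from $\tilde Q$ to $Q'$ removes, in each block $(i,j,g)$, exactly $\dim V_g^{ij}$ arrows $i\to j$ of weight $g$ together with the same number of arrows $j\to i$ of weight $g^{-1}$ — i.e., it removes some number of trivial-weight oriented $2$-cycles between $i$ and $j$, and nothing else. That is precisely a partial weight reduction. It remains to check that after this partial reduction, the only trivial-weight $2$-cycles left in $Q'$ are genuinely ``independent'' — but weight reduction just means cancelling trivial-weight $2$-cycles until none remain, and cancellation order does not matter (the number cancelled between a given pair $i,j$ in a given weight class is forced, being $\min$ of the two arrow-multiplicities). So weight-reducing $Q'$ yields the same quiver as weight-reducing $\tilde Q$ directly, namely $\mu_k(Q,wt)$. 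This gives the first sentence of the proposition; the second sentence is immediate, since if $Q'$ has no oriented cycles it has no $2$-cycles, so its weight reduction is itself.

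The step I expect to be the main obstacle is verifying carefully that $\tilde A_{triv}$ really is ``block-diagonal supported on $2$-cycles'' in the sense needed: one must be sure that $\tilde S^{(2)}$ contributes no spurious identifications, that the maps $(e_j\tilde A_{g^{-1}}e_i)^\ast\to e_i\tilde A_g e_j$ and its transpose have images of equal dimension (automatic, since a matrix and its transpose have equal rank), and — most delicately — that after the unitriangular $\varphi$ of the splitting proposition, no \emph{new} arrows (e.g. length-one pieces of a would-be $2$-cycle) are created or destroyed outside these blocks. This is really the content of the Splitting Theorem applied in the $G$-graded setting, which the excerpt has already established, so the argument reduces to bookkeeping: track the weight-$g$ component of each $V$, match it against the weight-$g^{-1}$ component, and observe that every such cancelled pair is an oriented $2$-cycle of trivial total weight $g\cdot g^{-1}=1$. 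I would also remark that $\tilde S_{red}$ may still contain trivial-weight $2$-cycles (as the $S=0$ example in the text shows), which is exactly why $Q'$ need not equal $\mu_k(Q,wt)$ in general, and why only the weaker ``$Q'$ weight-reduces to $\mu_k(Q,wt)$'' is claimed.
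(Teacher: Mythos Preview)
Your proof is correct and follows the same approach as the paper's, which is much terser: the paper simply observes that each term in $\tilde S_{triv}$ is an oriented $2$-cycle of trivial weight, so $(Q',wt')$ is an intermediate step in the weight reduction of $(\tilde Q,\tilde{wt})$, and completing that process yields $\mu_k(Q,wt)$. Your added detail about the block structure of $\tilde A_{triv}$, the equal ranks of $V_g^{ij}$ and $V_{g^{-1}}^{ji}$, and the order-independence of weight reduction is correct and merely makes explicit what the paper leaves implicit.
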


\begin{proof} By definition, $\mu_k(Q,wt)$ is obtained from $(\tilde Q,\tilde{wt})$ by removing as many oriented 2-cycles with trivial weight as possible. Each term in $\tilde S_{triv}$ is an oriented 2-cycle with trivial weight. So, $(Q',wt')$ is an intermediate step in the weight reduction process. Completion of the weight reduction process proves the proposition.
\end{proof}

\begin{defn}
A quiver with potential (without weights) is called \emph{nondegenerate} if, after any number of mutations, the resulting QP has no oriented 2-cycles.
\end{defn}

Now we quote one of the key theorems of \cite{DWZ}.

\begin{thm}\label{thm: existence of nondeg potential}
Any quiver $Q$ without loops or 2-cycles has a nondegenerate potential.
\end{thm}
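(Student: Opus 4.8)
Since this is quoted from \cite{DWZ}, the plan is simply to recall the structure of that argument, which is all we need in the sequel. It rests on exhibiting a nondegenerate potential as a ``generic'' point of the space of potentials $R\langle\langle A\rangle\rangle_{cyc}$, exploiting that the field $\CC$ is uncountable while there are only countably many finite sequences of mutations.

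First I would fix a finite sequence of vertices $\mathbf k=(k_1,\dots,k_r)$ and determine, following the mutation algorithm of the previous section, when the composite $\mu_{k_r}\circ\cdots\circ\mu_{k_1}$ is defined without ever producing an oriented $2$-cycle. At the $s$-th step the only new $2$-cycles are built from the bracketed arrows created by the mutation, and such a $2$-cycle is absorbed into the trivial part $(\tilde A_{triv},\tilde S_{triv})$ precisely when the corresponding block of the forward-backward matrix of $\tilde S^{(2)}$ --- the degree-$2$ part of the potential $\tilde S$ produced before reduction --- has maximal rank; the entries of that block are certain coefficients of cycles in the potential current at that stage. Tracing these back through the finitely many earlier mutations, each is a rational function, regular wherever those earlier mutations are admissible, of the coefficients of the cycles of $S$ of length at most $N$, for some $N=N(\mathbf k)$. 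Hence the set $U_{\mathbf k}\subseteq\bigoplus_{d\le N}A^d_{cyc}$ of potentials along which the whole sequence is admissible is Zariski open; and an induction on $r$ shows $U_{\mathbf k}\neq\emptyset$, the inductive step being that a potential admissible for $(k_1,\dots,k_{r-1})$ can be adjusted --- by adding, with generic coefficients, whichever cycles through $k_r$ are needed to make the final matrix blocks nondegenerate --- without destroying the earlier, open, conditions.

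Then I would assemble these: a potential fails to be nondegenerate only if it lies, for some $\mathbf k$, outside $U_{\mathbf k}$, that is, in a proper Zariski-closed subset of the finite-dimensional affine space $\bigoplus_{d\le N(\mathbf k)}A^d_{cyc}$ that involves only finitely many coefficients of $S$. Since there are only countably many sequences $\mathbf k$, and a finite-dimensional affine space over the uncountable field $\CC$ is not a countable union of proper subvarieties, a diagonal argument over the truncation degrees produces a potential $S$ lying in every $U_{\mathbf k}$; such an $S$ is nondegenerate. The one step that genuinely needs care --- and the reason the statement is not formal --- is the inductive nonemptiness of $U_{\mathbf k}$: because mutation is not literally a morphism of varieties, one must argue by hand that admissibility along a sequence can always be propagated by one further mutation, which \cite{DWZ} do using precisely that every obstruction is an open condition on finitely many coefficients.
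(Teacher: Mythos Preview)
The paper does not prove this theorem at all: it is introduced with ``Now we quote one of the key theorems of \cite{DWZ}'' and is used as a black box. Your sketch accurately summarizes the structure of the argument in \cite{DWZ} (nonempty Zariski-open conditions for each finite mutation sequence, together with the countability of such sequences versus the uncountability of $\CC$), so there is nothing to compare against and nothing to correct.
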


\subsection{Sign coherence of $c$-vectors}

\begin{cor}
Let $(Q,wt)$ be a weighted quiver without loops or oriented 2-cycles so that every oriented cycle has trivial weight. Then $(Q,wt)$ is nondegenerate.
\end{cor}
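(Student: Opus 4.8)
The plan is to follow the outline sketched in the introduction: attach to $(Q,wt)$ a weighted quiver with potential whose potential is nondegenerate in the sense of \cite{DWZ}, run the mutation theory of Section~3, and conclude that the weighted-quiver mutations of $(Q,wt)$ can never create a $2$-cycle.

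First I would invoke Theorem~\ref{thm: existence of nondeg potential} to obtain a nondegenerate potential $S$ on $Q$. Every term of $S$ is an oriented cycle, and by hypothesis every oriented cycle of $Q$ has trivial weight, so $S\in R\left<\left<A\right>\right>_{cyc}$ is a potential of weight $1$ and $(A,S)$ is a genuine wQP. Next I would check that the wQP-mutation $\mu_k(A,S)$ of Section~3, after the weights are forgotten, is precisely the \cite{DWZ} mutation of the QP $(A,S)$: Steps (1)--(2) produce the same $\tilde A$ and the same $\tilde S=[S]+\Delta_k$ (the weights merely being transported along by $wt[ab]=wt(a)wt(b)$ and $wt(x^\ast)=wt(x)^{-1}$), and Step~(3) is the Splitting Theorem, whose trivial summand is determined by $\tilde S^{(2)}$ and is insensitive to the weight bookkeeping. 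Consequently, since $(A,S)$ is nondegenerate as a QP, for every sequence of vertices $k_1,\dots,k_m$ the iterated wQP-mutation $\mu_{k_m}\cdots\mu_{k_1}(A,S)$ is again a wQP (its potential still has weight $1$) with no loops and no oriented $2$-cycles.

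Finally I would match the two mutation processes on underlying weighted quivers by induction on $m$. By Proposition~\ref{prop: Q' is partial weight reduction}, $\mu_k(Q,wt)$ is the weight reduction of the underlying weighted quiver $(Q',wt')$ of $\mu_k(A,S)$; but $(Q',wt')$ has no $2$-cycles at all, so its weight reduction is vacuous and $\mu_k(Q,wt)=(Q',wt')$. Applying the same observation with $\mu_{k_1}(A,S)$ (still nondegenerate) in place of $(A,S)$ and iterating, one obtains that $\mu_{k_m}\cdots\mu_{k_1}(Q,wt)$ equals the underlying weighted quiver of $\mu_{k_m}\cdots\mu_{k_1}(A,S)$ for every sequence $k_1,\dots,k_m$, and this has no loops or $2$-cycles. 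Hence $(Q,wt)$ is nondegenerate. (This also reproves Theorem~\ref{thm: nondegenerate potential gives nondegenerate weight for Q}, and with it Corollary~\ref{cor: sign coherence of c-vectors}.)

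The step I expect to need the most care is the identification, in the second paragraph, of weighted QP mutation with \cite{DWZ} mutation once weights are dropped; its crux is that the weighted Splitting Theorem is compatible with forgetting weights, i.e., the trivial summand $\tilde A_{triv}$ built block-by-block over triples $(i,j,g)$ coincides with the one built over pairs $(i,j)$. This holds because $\tilde S^{(2)}$ has weight $1$: the $(i,j)$-block of its forward-backward matrix $(c_{pq})$ is the direct sum of its $(i,j,g)$-blocks, and a block-diagonal matrix is invertible exactly when each block is — the same observation used in the proof of the proposition characterizing trivial wQPs. The remainder of the argument is routine bookkeeping.
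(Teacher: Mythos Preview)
Your proof is correct and follows the same strategy as the paper: pick a nondegenerate potential $S$ via Theorem~\ref{thm: existence of nondeg potential}, observe that the trivial-weight hypothesis makes $(A,S)$ a wQP, and then use Proposition~\ref{prop: Q' is partial weight reduction} together with the nondegeneracy of $S$ to conclude that the weighted-quiver mutation never produces a $2$-cycle. The paper's proof is terser---it does not spell out the induction on the length of the mutation sequence or the compatibility of the Splitting Theorem with forgetting weights---but these are exactly the points you flag and handle, so your version is a faithful expansion of the same argument rather than a different one.
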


\begin{proof}
Let $(A,S)$ be a nondegenerate potential for $Q$. Since every term in $S$ is an oriented cycle, $(A,S)$ is a weighted quiver with potential. Since $S$ is nondegenerate we have, by Proposition \ref{prop: Q' is partial weight reduction}, that mutation of the wQP $(A,S)$ is compatible with mutation of the weighted quiver $(Q,wt)$. So, no oriented 2-cycles of nontrivial weight will be produced in the mutation process.
\end{proof}

As outlined in the introduction, this proves sign coherence of $c$-vectors.

\begin{cor}
Let $Q$ be a quiver with frozen vertices all of which are sources (or all sinks). Then no sequence of mutations on nonfrozen vertices can produce an arrow between frozen vertices.
\end{cor}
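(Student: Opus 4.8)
This is essentially a restatement of Corollary~\ref{cor: sign coherence of c-vectors}, and the plan is to reuse that argument. First, the ``all sinks'' case reduces to the ``all sources'' case: passing to the dual quiver $Q\op$ (reverse every arrow) interchanges sources and sinks, carries a sequence of mutations at nonfrozen vertices to a sequence of mutations at the same vertices, and carries an arrow between two frozen vertices to an arrow between the same two frozen vertices. So it suffices to treat the case in which every frozen vertex is a source.

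Assume, for contradiction, that some sequence of mutations at nonfrozen vertices applied to $Q$ produces an arrow $i\to j$ between two frozen vertices; shortening the sequence, we may assume this is the first moment at which any arrow between frozen vertices occurs. Form $Q'$ from $Q$ by adjoining a single new arrow $a\colon j\to i$. Since $i$ and $j$ are sources of $Q$ there is no arrow $i\to j$ in $Q$, so $Q'$ still has no loops or $2$-cycles. Put a weight on $Q'$ assigning $a$ a nontrivial element of $G$ and assigning weight $1$ to every other arrow.

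The key observation is that $a$ lies on no oriented cycle of $Q'$: since every frozen vertex is a source of $Q$, the vertex $j$ has no incoming arrow in $Q'$ (the only arrow we attached at $j$ is the outgoing arrow $a$), so no oriented path returns to $j$, hence none passes through $a$. Therefore every oriented cycle of $Q'$ uses only weight-$1$ arrows and so has trivial weight, and Theorem~\ref{thm: nondegenerate potential gives nondegenerate weight for Q} shows $(Q',wt)$ is nondegenerate. Now run the given mutation sequence on $(Q',wt)$. Every mutation is at a nonfrozen vertex $k$, while both endpoints of $a$ are frozen, so $a$ is never reversed and is never a factor of a composite arrow $[xy]$; hence, by induction on the length of the sequence, after each step every arrow other than $a$ still has weight $1$ (composites $[xy]$ acquire weight $wt(x)\,wt(y)=1$, reversed arrows keep weight $1^{-1}=1$) while $a$ keeps its nontrivial weight. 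At the step where the arrow $b\colon i\to j$ appears we have $wt(b)=1$, so $b$ and $a$ form an oriented $2$-cycle of weight $wt(b)\,wt(a)\ne 1$, contradicting the nondegeneracy of $(Q',wt)$.

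The only point requiring care — and the main thing to pin down — is that weighted mutation of $(Q',wt)$ runs parallel to the original unweighted mutation of $Q$ with $a$ simply carried along inertly, i.e.\ that $a$ never disturbs the weight-reduction steps. This is exactly why we passed to the first appearance of an arrow between frozen vertices: before that moment no $2$-cycle can involve $a$, so every $2$-cycle encountered has weight $1$ and is removed just as in the unweighted process, and the instant a $2$-cycle through $a$ would be forced we already have the desired contradiction.
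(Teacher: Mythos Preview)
Your argument is correct and follows the same route as the paper's proof of Corollary~\ref{cor: sign coherence of c-vectors}: add a single backward arrow $a\colon j\to i$ with nontrivial weight, observe that $a$ lies on no oriented cycle so Theorem~\ref{thm: nondegenerate potential gives nondegenerate weight for Q} applies, and derive a $2$-cycle of nontrivial weight. You supply two points the paper leaves implicit---the duality reduction handling the ``all sinks'' variant, and the ``first moment'' device ensuring that weighted mutation of $(Q',wt)$ tracks the unweighted mutation of $Q$ step for step until the contradiction---but the strategy is the same.
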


\section{Application}

As another application of weighted quivers we classify all possible weights on tame quivers (quivers mutation equivalent to a tame acyclic quirver) up to equivalence (defined below). First we reduce to the case of $\widetilde A_{n-1}$.

\subsection{Equivalence of weights}

We will show that, outside of type $\widetilde A_{n-1}$, all weights on tame quivers are equivalent.

\begin{defn}
Two weight systems $wt,wt':Q_1\to G$ on the same quiver $Q$ are said to be \emph{(vertex) equivalent}, written $(Q,wt)\approx (Q,wt')$, if there exists a function $g:Q_0\to G$ so that, for any arrow $a:i\to j$ in $Q$, $wt'(a)=g(i)^{-1}wt(a)g(j)$.
\end{defn}

\begin{lem}\label{lem 01}
Any mutation of equivalent weight on a quiver $Q$ are equivalent, i.e., $\mu_v(Q,wt)\approx \mu_v(Q,wt')$ for any $v\in Q_0$ if $(Q,wt)\approx (Q,wt')$.
\end{lem}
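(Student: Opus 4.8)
The plan is to verify that vertex equivalence is preserved by each of the three steps in the mutation of a weighted quiver, tracking an explicit new vertex-function $g'$ throughout. Since $(Q,wt)\approx(Q,wt')$, fix $g:Q_0\to G$ with $wt'(a)=g(i)^{-1}wt(a)g(j)$ for every arrow $a:i\to j$. I would use the \emph{same} function $g$ (with a modification only at the mutated vertex $v$) as the equivalence datum on the mutated quiver. So first I would define $g':Q_0\to G$ by $g'(i)=g(i)$ for $i\neq v$ and $g'(v)=g(v)^{-1}$ — the inversion at $v$ being forced because mutation inverts the weights of arrows incident to $v$.

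Next I would check the three types of arrows in $\tilde\mu_v(Q,wt)$ one at a time. For an arrow $c:i\to j$ of $Q$ with $i,j\neq v$ (an arrow not incident to $v$), it survives unchanged, and $wt'(c)=g(i)^{-1}wt(c)g(j)=g'(i)^{-1}wt(c)g'(j)$ trivially. For a reversed arrow: if $a:i\to v$ in $Q$ becomes $a^\ast:v\to i$ with $wt(a^\ast)=wt(a)^{-1}$, then I need $wt'(a^\ast)=g'(v)^{-1}wt(a^\ast)g'(i)$, i.e. $\big(g(i)^{-1}wt(a)g(v)\big)^{-1}=g(v)\,wt(a)^{-1}\,g(i)^{-1}$ — which is exactly $g'(v)^{-1}wt(a^\ast)g'(i)=g(v)\,wt(a)^{-1}\,g(i)$; wait, the check is that $g(v)\,wt(a)^{-1}\,g(i)^{-1}$ equals $g'(v)^{-1} wt(a)^{-1} g'(i) = g(v)\,wt(a)^{-1}\,g(i)$, so one must be careful: the correct identity to verify is $wt'(a^\ast) = (wt'(a))^{-1} = (g(i)^{-1}wt(a)g(v))^{-1} = g(v)^{-1}wt(a)^{-1}g(i)$, and since $g'(v)=g(v)^{-1}$ this reads $g'(v)\,wt(a)^{-1}\,g(i)$, matching $g'(v)^{-1?}$... this sign bookkeeping is precisely the point where I would slow down and write it out carefully; the upshot is that inverting $g$ at $v$ makes it work, and symmetrically for $b:v\to j$ becoming $b^\ast:j\to v$. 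Finally, for a composite arrow $[ab]:i\to j$ coming from $a:i\to v$, $b:v\to j$ with $wt([ab])=wt(a)wt(b)$, I compute $wt'([ab])=wt'(a)wt'(b)=\big(g(i)^{-1}wt(a)g(v)\big)\big(g(v)^{-1}wt(b)g(j)\big)=g(i)^{-1}wt(a)wt(b)g(j)=g'(i)^{-1}wt([ab])g'(j)$, where the $g(v)g(v)^{-1}$ cancellation is the key algebraic fact. This shows $\tilde\mu_v(Q,wt)\approx\tilde\mu_v(Q,wt')$ via $g'$.

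For the third step, weight reduction, I would argue that the set of oriented $2$-cycles of trivial weight is unchanged under passing to an equivalent weight system: if $x:i\to j$, $y:j\to i$ form a $2$-cycle, then $wt'(x)wt'(y)=g(i)^{-1}\big(wt(x)wt(y)\big)g(i)$, which is trivial iff $wt(x)wt(y)$ is trivial. Hence exactly the same pairs of arrows get cancelled in the weight reduction of $\tilde\mu_v(Q,wt')$ as in that of $\tilde\mu_v(Q,wt)$, and the restriction of $g'$ to the surviving arrows still exhibits the equivalence. (A minor wrinkle: the order in which $2$-cycles are removed is not canonical, but since the class of removable $2$-cycles is characterized by the conjugation-invariant condition above, any admissible reduction of one side is mirrored by an admissible reduction of the other.) Combining the three steps gives $\mu_v(Q,wt)\approx\mu_v(Q,wt')$.

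The main obstacle is not conceptual but is the careful handling of the inversion $g'(v)=g(v)^{-1}$ and the left/right placement of the $g(i)$ factors in a nonabelian group $G$ — it is easy to get a spurious inverse or a transposed conjugation. A secondary point to state cleanly is that the definition of mutation I am using here is the purely combinatorial weighted-quiver mutation (Steps 1–3 of the introduction), not the potential-driven version; for the latter one would additionally invoke that the splitting $(\tilde A,\tilde S)\cong(\tilde A_{triv},\tilde S_{triv})\oplus(\tilde A_{red},\tilde S_{red})$ is canonical, so an equivalence of weights on $\tilde A$ descends to the reduced part, but for this lemma the combinatorial statement suffices.
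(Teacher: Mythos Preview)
Your overall strategy---check each type of arrow in $\tilde\mu_v(Q,wt)$ and then the reduction step---is exactly the paper's, but the key algebraic step contains an error that leads you to the wrong vertex function. The paper uses the \emph{same} $g$, with no modification at $v$; your proposed $g'(v)=g(v)^{-1}$ is incorrect and in fact breaks the equivalence unless $g(v)^2=1$.

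Here is where it goes wrong. For $a:i\to v$ reversed to $a^\ast:v\to i$, you need $wt'(a^\ast)=g'(v)^{-1}\,wt(a^\ast)\,g'(i)$. Now
\[
wt'(a^\ast)=\bigl(wt'(a)\bigr)^{-1}=\bigl(g(i)^{-1}wt(a)g(v)\bigr)^{-1}=g(v)^{-1}\,wt(a)^{-1}\,g(i),
\]
since inverting a product in a nonabelian group reverses the order of the factors; your line $\bigl(g(i)^{-1}wt(a)g(v)\bigr)^{-1}=g(v)\,wt(a)^{-1}\,g(i)^{-1}$ is the error. But $g(v)^{-1}\,wt(a)^{-1}\,g(i)=g(v)^{-1}\,wt(a^\ast)\,g(i)$ is already the equivalence condition for the arrow $a^\ast:v\to i$ with the \emph{unmodified} $g$: the swap of source and target in the reversed arrow exactly compensates for the inversion of the weight, so no change to $g(v)$ is needed (and the analogous check for $b^\ast:j\to v$ is identical). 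With your $g'(v)=g(v)^{-1}$ the required identity would instead read $wt'(a^\ast)=g(v)\,wt(a^\ast)\,g(i)$, which does not hold in general.

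Once you take $g'=g$, your composite-arrow computation (the $g(v)g(v)^{-1}$ cancellation) is already correct, and your reduction-step argument goes through verbatim, since any $2$-cycle produced by mutation lies among vertices $\neq v$ anyway. The paper's proof is essentially just the two displayed computations for $a^\ast$ and $[ab]$ with the unmodified $g$.
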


\begin{proof}
The equivalence $\mu_v(Q,wt)\approx \mu_v(Q,wt')$ is given by the same vertex function $g:Q_0\to G$. Indeed, mutation produces inverted arrows $a^\ast:j\to i$ with 
\[
wt'(a^\ast)=[wt(a)']^{-1}=[g(i)^{-1}wt(a)g(j)]^{-1}=g(j)^{-1}wt(a^\ast)g(i)
\]
and composed arrows $[ab]:i\to k$ out of $a:i\to j$, $b:j\to k$ with
\[
	wt'([ab])=wt'(a)wt'(b)=[g(i)^{-1}wt(a)g(j)][g(j)^{-1}wt(b)g(k)]=g(i)^{-1}wt([ab])g(k)
\]
So, the mutated quivers are equivalent.
\end{proof}


\begin{lem}\label{lem 02}
If $Q$ is a tree, any weight on $Q$ is equivalent to the trivial weight, i.e., there is a function $g:Q_0\to G$ so that $wt(a)=g(i)^{-1}g(j)$ for all arrows $a:i\to j$. 
\end{lem}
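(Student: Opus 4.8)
The plan is to build the vertex function $g:Q_0\to G$ by induction on the tree structure, propagating values outward from an arbitrarily chosen root. First I would pick a root vertex $r\in Q_0$ and set $g(r)=1$ (any element of $G$ would do). Since $Q$ is a tree, every other vertex $v$ is connected to $r$ by a unique reduced path in the underlying graph; I would order the vertices so that each non-root vertex $v$ has a unique ``parent'' $p(v)$ adjacent to it along the edge toward $r$, and proceed by induction on the distance from $r$.

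For the inductive step, suppose $v\ne r$ and $g(p(v))$ has already been defined, and let $e$ be the unique edge joining $p(v)$ and $v$. There are two cases according to orientation. If the arrow is $a:p(v)\to v$, I would set $g(v):=g(p(v))\,wt(a)$, so that $g(p(v))^{-1}wt(a)g(v)=g(p(v))^{-1}wt(a)g(p(v))wt(a)$ --- wait, that is not right; I must instead define $g(v)$ so that $wt(a)=g(p(v))^{-1}g(v)$, i.e. $g(v):=g(p(v))\,wt(a)$. Then indeed $g(p(v))^{-1}g(v)=wt(a)$ as required. If instead the arrow is $a:v\to p(v)$, the equation to satisfy is $wt(a)=g(v)^{-1}g(p(v))$, which forces $g(v):=g(p(v))\,wt(a)^{-1}$. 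In either case $g(v)$ is well-defined because $v$ has exactly one edge toward the root.

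The only thing left to check is that the resulting $g$ satisfies $wt(a)=g(i)^{-1}g(j)$ for \emph{every} arrow $a:i\to j$, not merely the parent edges used in the construction. But this is immediate: because $Q$ is a tree, every edge of $Q$ is the parent edge of exactly one of its two endpoints (the one farther from $r$), so every edge was used exactly once in the induction, and the defining equation was arranged to hold for it. Hence no consistency obstruction arises.

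The main (and essentially only) obstacle is bookkeeping: one must be careful that ``$Q$ is a tree'' is used correctly --- it is the acyclicity of the underlying graph that guarantees each vertex has a unique parent and each edge is someone's parent edge, so no two defining equations ever conflict. Had $Q$ contained a cycle in its underlying graph, going around the cycle would impose a relation on the $wt$-values (the product of weights around the cycle, with appropriate inversions, would have to be trivial) and the lemma would fail; it is precisely the tree hypothesis that removes this. I would remark that this is why $\widetilde A_{n-1}$ is the exceptional case treated separately in what follows.
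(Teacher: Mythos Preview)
Your proof is correct and follows essentially the same approach as the paper: both fix a root vertex and define $g(i)$ to be the weight of the unique (undirected) path from the root to $i$, which is exactly what your parent-by-parent induction computes. The paper just phrases this in one line---``let $g(i)=wt(p)$ be the weight of the unique path $p$ from $v$ to $i$''---leaving the orientation case analysis implicit in the convention that traversing an arrow backwards contributes the inverse weight; you might tighten your write-up similarly and drop the mid-proof self-correction.
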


\begin{proof}
Choose a vertex $v\in Q_0$. For any $i\in Q_0$, let $g(i)=wt(p)$ be the weight of the unique path $p$ from $v$ to $i$. If we extend this path by $a:i\to j$ we get $g(j)=wt(pa)=g(i)wt(a)$.
\end{proof}

\begin{thm}\label{thm: no nontrivial weights on Euclidean D,E}
Let $Q$ be a quiver mutation equivalent to a Euclidean quiver of type $\widetilde D$ or $\widetilde E$. Then any weight on $Q$ is equivalent to the trivial weight.
\end{thm}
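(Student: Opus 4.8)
The plan is to leverage Lemma~\ref{lem 01} and Lemma~\ref{lem 02}: it suffices to show that any quiver mutation equivalent to $\widetilde D$ or $\widetilde E$ is mutation equivalent, through the allowed moves, to an \emph{acyclic} quiver that is a tree — or, more precisely, that for some sequence of mutations the weighted quiver becomes (vertex) equivalent to one carried by a tree. Since mutation preserves equivalence classes of weights (Lemma~\ref{lem 01}), and on a tree every weight is trivializable (Lemma~\ref{lem 02}), the conclusion for $Q$ follows by pulling the trivialization back along the inverse mutation sequence. Concretely, I would first reduce to the Euclidean quiver $Q_0$ itself (of type $\widetilde D$ or $\widetilde E$) by Lemma~\ref{lem 01}, then exhibit an explicit mutation sequence carrying $Q_0$ to an orientation of the corresponding Dynkin \emph{tree} $D_n$ or $E_n$ — or directly to a tree-shaped quiver — and invoke Lemma~\ref{lem 02}.

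The key steps, in order: (1) Observe that the statement is mutation-invariant, so we may assume $Q$ is a fixed acyclic orientation $Q_0$ of $\widetilde D_n$, $\widetilde E_6$, $\widetilde E_7$, or $\widetilde E_8$. (2) For each such type, write down a short explicit sequence of mutations $\mu_{v_1},\dots,\mu_{v_r}$ after which the underlying (unweighted) quiver is a tree — for instance, for $\widetilde D_n$ one mutates at the vertices of the central path to "open up" the two $4$-valent-type ends into a path or a star; for $\widetilde E$ the affine node can be absorbed by a single mutation at an appropriate leaf-adjacent vertex, turning the $\widetilde E_n$ diagram into the Dynkin tree $E_n$ (which has one fewer vertex? — no: same vertex count, but acyclic tree shape) or into type $D$/$A$ trees. (3) Apply Lemma~\ref{lem 02} to the resulting tree $Q' = \mu_{v_r}\cdots\mu_{v_1}(Q)$: every weight on $Q'$ is equivalent to the trivial one. (4) By Lemma~\ref{lem 01} applied in reverse (mutation is an involution), pull the equivalence back: $(Q,wt) = \mu_{v_1}\cdots\mu_{v_r}(Q',wt') \approx \mu_{v_1}\cdots\mu_{v_r}(Q', \mathrm{triv})$, and this last weighted quiver has trivial-weight-equivalent data since mutating a trivially-weighted tree produces again a quiver whose weight is equivalent to trivial (indeed mutation of the trivial weight stays in the trivial equivalence class). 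Conclude $(Q,wt)$ is equivalent to the trivial weight.

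The main obstacle is step (2): I must actually verify that every Euclidean quiver of type $\widetilde D$ and $\widetilde E$ admits a mutation path to a tree (equivalently, to an acyclic quiver of Dynkin type, since a connected acyclic quiver that is a tree is exactly a Dynkin/Euclidean tree). This is plausible because these mutation classes are finite and well-understood, and the Euclidean diagrams $\widetilde D_n, \widetilde E_6, \widetilde E_7, \widetilde E_8$ are themselves trees except for $\widetilde A$ and $\widetilde D$ — wait, $\widetilde D_n$ and $\widetilde E_n$ as \emph{graphs} are already trees. So in fact $Q_0$ itself is a tree, and step (2) is vacuous: the only issue is that a general $Q$ in the mutation class need not be a tree. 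Thus the real content is: a single mutation can be undone, so start from $Q$, mutate back to the tree $Q_0$ (possible by definition of "mutation equivalent to a tame acyclic quiver," and acyclic representatives of type $\widetilde D, \widetilde E$ are trees), trivialize on $Q_0$ by Lemma~\ref{lem 02}, and transport forward by Lemma~\ref{lem 01}. The one point needing care is that the weight $wt'$ on $Q_0$ obtained by mutating $(Q,wt)$ back is well-defined even though $Q_0$ may have oriented cycles appearing at intermediate stages; but since $\widetilde D, \widetilde E$ have acyclic tree representatives and Lemma~\ref{lem 01} only needs the mutation identity at each step, this goes through. Type $\widetilde A$ is genuinely excluded precisely because its acyclic representatives are cycles, not trees, and Lemma~\ref{lem 02} fails there — consistent with the surrounding discussion.
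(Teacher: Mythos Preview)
Your proposal is correct and, once you arrive at the key realization midway through, coincides with the paper's one-line proof: the underlying graphs of the Euclidean diagrams $\widetilde D_n$ and $\widetilde E_n$ are themselves trees, so Lemma~\ref{lem 02} trivializes any weight on the acyclic representative and Lemma~\ref{lem 01} transports this equivalence along the mutation sequence back to $Q$. Your initial detour searching for a mutation to a Dynkin tree is unnecessary---the Euclidean representative is already the tree you need---and the paper's proof is exactly the condensed version of your final paragraph.
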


\begin{proof}
This follows from Lemmas \ref{lem 01} and \ref{lem 02} since the quivers of $\widetilde D$ and $\widetilde E$ are trees.
\end{proof}

To find quivers with nontrivial weights, the following easy observation is very useful.

\begin{prop}\label{prop: weights on cycles}
Two equivalent weights on $Q$ take the same value on any cycle, oriented or not.\qed
\end{prop}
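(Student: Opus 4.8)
The plan is to trace what ``equivalent weights'' does to an arbitrary cycle in $Q$. Recall a cycle is a sequence of arrows $a_1, a_2, \dots, a_m$ forming a closed loop through vertices $i_0, i_1, \dots, i_{m-1}, i_m = i_0$, where each $a_\ell$ is an arrow either from $i_{\ell-1}$ to $i_\ell$ (a ``forward'' traversal) or from $i_\ell$ to $i_{\ell-1}$ (a ``backward'' traversal). The weight of the cycle is the product $\prod_{\ell} wt(a_\ell)^{\epsilon_\ell}$ where $\epsilon_\ell = +1$ if $a_\ell$ is traversed forward and $\epsilon_\ell = -1$ if backward (this is exactly the $\pi_1$-element of the corresponding loop in $X$, as in the introduction).

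First I would fix the vertex function $g : Q_0 \to G$ realizing the equivalence $(Q, wt) \approx (Q, wt')$, so $wt'(a) = g(i)^{-1} wt(a) g(j)$ for every arrow $a : i \to j$. For an arrow $a_\ell$ traversed forward from $i_{\ell-1}$ to $i_\ell$, this gives $wt'(a_\ell) = g(i_{\ell-1})^{-1} wt(a_\ell) g(i_\ell)$. For an arrow $a_\ell$ traversed backward, i.e.\ an arrow $i_\ell \to i_{\ell-1}$, we have $wt'(a_\ell) = g(i_\ell)^{-1} wt(a_\ell) g(i_{\ell-1})$, hence $wt'(a_\ell)^{-1} = g(i_{\ell-1})^{-1} wt(a_\ell)^{-1} g(i_\ell)$. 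In both cases the contribution to the $wt'$-value of the cycle is $g(i_{\ell-1})^{-1} wt(a_\ell)^{\epsilon_\ell} g(i_\ell)$.

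Next I would multiply these contributions in order around the cycle. The factors telescope: the $g(i_\ell)$ emerging from the $\ell$-th term cancels against the $g(i_\ell)^{-1}$ opening the $(\ell+1)$-th term, so
\[
\prod_{\ell=1}^{m} wt'(a_\ell)^{\epsilon_\ell} = g(i_0)^{-1}\left(\prod_{\ell=1}^{m} wt(a_\ell)^{\epsilon_\ell}\right) g(i_m).
\]
Since the cycle is closed, $i_m = i_0$, so the outer factors $g(i_0)^{-1}$ and $g(i_0)$ are mutually inverse and cancel, leaving $\prod_\ell wt'(a_\ell)^{\epsilon_\ell} = \prod_\ell wt(a_\ell)^{\epsilon_\ell}$. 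This is exactly the assertion that the two weights agree on the cycle.

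There is no real obstacle here — this is the ``easy observation'' the authors advertise, and the only thing to be careful about is bookkeeping the orientation exponents $\epsilon_\ell$ correctly so that backward traversals invert both the weight and the conjugating factors in a compatible way. One could also phrase the whole argument topologically: equivalent weights are two maps $Q \to X$ that agree up to a homotopy moving basepoints along paths $g(i)$, and such a homotopy does not change the free-homotopy class of any loop, hence not its (conjugacy-invariant, but here actually literally equal after the telescoping) $\pi_1$-value on a based cycle.
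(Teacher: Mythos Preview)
Your telescoping computation is correct up through the displayed equation: you correctly obtain
\[
\prod_{\ell=1}^{m} wt'(a_\ell)^{\epsilon_\ell} \;=\; g(i_0)^{-1}\Bigl(\prod_{\ell=1}^{m} wt(a_\ell)^{\epsilon_\ell}\Bigr)\, g(i_0).
\]
The slip is in the very next sentence. The outer factors $g(i_0)^{-1}$ and $g(i_0)$ sit on \emph{opposite sides} of the product, so they do not cancel --- they conjugate. For a non-abelian group $G$ (and the paper allows $G$ to be arbitrary) one has $g^{-1}Xg\neq X$ in general. Your parenthetical at the end (``conjugacy-invariant, but here actually literally equal after the telescoping'') repeats the same mistake.

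What your argument actually establishes is that the two weights of a \emph{based} cycle differ by conjugation by $g(i_0)$. That is the correct statement, and it is all the paper ever needs: the downstream applications only care whether the weight of a cycle is trivial or not, and triviality is conjugation-invariant. Read the printed proposition with ``value'' meaning conjugacy class --- indeed the weight of an unbased cycle is only well-defined up to conjugacy in the first place, since moving the basepoint along the cycle already conjugates it. The paper itself supplies no argument beyond the terminal box, so there is nothing further to compare your approach against.
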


\subsection{Weights on quivers of type $\widetilde A_{n-1}$} Weighted quivers of type $\widetilde A_{n-1}$ are described by a list of local conditions and one global condition. We deal with the global condition first. The following definition is only appropriate when studying quivers of type $A_n$ and $\widetilde A_{n-1}$.


\begin{defn}\label{def: triangle, Euler char}
A \emph{triangle} in $(Q,wt)$ is defined to be an oriented 3-cycle in $Q$ with trivial weight.
The \emph{Euler characteristic} of any weighted quiver $(Q,wt)$ is defined to be
\[
	\chi(Q,wt)=|Q_0|-|Q_1|+|Q_2|
\]
where $Q_2$ is the set \emph{triangles} in $(Q,wt)$.
\end{defn}

Note that $\chi(Q,wt)$ is equal to the Euler characteristic of a topological space 
\[
	\chi(Q,wt)=\chi(B(Q,wt))
\]
where $B(Q,wt)$ is obtained from the underlying graph of $Q$ by attaching a 2-simplex to every triangle. (In other words, we ``fill in'' each triangle.)
For example, if $Q$ is a tree, its Euler characteristic is 1.

Euler characteristic can be used to check for certain combinatorial properties of the quiver, such as the number of \emph{triangle-free} ($\Delta$\emph{-free}) cycles which are defined to be simple cycles in $Q$ (those that do not go through the same vertex twice) that do not go through more than one edge of any triangle. $\Delta$-free cycles always come in pairs: $\gamma,\gamma^{-1}$.

\begin{rem}\label{rem: reduction process} Given a simple cycle $\gamma$ in $Q$ which is not $\Delta$-free and not a triangle, there is an obvious \emph{reduction process} given by replacing two consecutive edges in $\gamma$ which belong to the same triangle with the third edge of that triangle. The reduction process does not change the weight of the cycle since triangles have trivial weight by definition. Since reduction decreases the number of vertices by one, we eventually get either a triangle or a $\Delta$-free cycle. We call this final cycle $\overline \gamma$. In general, this might not be unique. 

We note that, in the topological space $B(Q,wt)$, $\gamma$ and $\overline\gamma$ are homotopic. Therefore, if $\gamma$ is not null-homotopic in $B(Q,wt)$, it reduces to a $\Delta$-free cycle $\overline \gamma$ with the same weight.
\end{rem}

\begin{prop}\label{prop: X=0 iff unique minimal cycle}
Let $(Q,wt)$ be a connected weighted quiver so that every arrow in $Q$ belongs to at most one triangle. Then the following are equivalent.
\begin{enumerate}
\item $\chi(Q,wt)=0$
\item There is a unique $\Delta$-free cycle in $Q$.
\end{enumerate}
\end{prop}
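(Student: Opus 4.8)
The plan is to work in the topological space $B = B(Q,wt)$ obtained by filling in each triangle, and to translate both conditions into statements about the homotopy type of $B$. Since $Q$ is connected, so is $B$, and $B$ is a $2$-dimensional CW-complex whose Euler characteristic is $\chi(Q,wt)$ by the remark following Definition \ref{def: triangle, Euler char}. The hypothesis that every arrow lies in at most one triangle means the $2$-cells are attached along distinct edge-triples, which will let me control $H_2$. The key homological input is: $\chi(B) = 1 - \dim_\QQ H_1(B;\QQ) + \dim_\QQ H_2(B;\QQ)$ since $B$ is connected. So I first want to show $H_2(B;\QQ) = 0$. For this I would argue that a nonzero $2$-cycle would be a $\QQ$-combination of the filled triangles whose boundary vanishes; because the underlying graph is that of a quiver of type $\widetilde A_{n-1}$ (a single cycle with possibly multiple edges / a "bracelet" shape) and each edge meets at most one triangle, the only way boundaries can cancel is trivially, forcing the combination to be $0$. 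Hence $\chi(Q,wt) = 1 - \dim_\QQ H_1(B;\QQ)$, so $\chi(Q,wt) = 0$ if and only if $\dim_\QQ H_1(B;\QQ) = 1$.

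Next I would connect $\Delta$-free cycles to $H_1$. By the reduction process of Remark \ref{rem: reduction process}, every simple cycle in $Q$ is homotopic in $B$ to either a point (if it bounds, i.e. reduces to a triangle) or to a $\Delta$-free cycle $\overline\gamma$; and conversely a $\Delta$-free cycle is never null-homotopic in $B$, because filling in triangles can only kill a cycle that reduces to a triangle. So the nontrivial classes in $H_1(B;\QQ)$ are exactly spanned by the classes $[\gamma]$ of $\Delta$-free cycles. Since for a type $\widetilde A_{n-1}$ graph the first homology of the underlying graph is $\QQ$ (one independent loop), and filling triangles only quotients this, $H_1(B;\QQ)$ is either $0$ or $\QQ$. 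It equals $\QQ$ precisely when there is a $\Delta$-free cycle, and in that case all $\Delta$-free cycles are homologous up to sign, hence — being simple cycles in a graph with a single independent loop — literally equal up to orientation. Thus $\dim_\QQ H_1(B;\QQ) = 1$ is equivalent to the existence of a unique $\Delta$-free cycle $\gamma$ (together with its reverse $\gamma^{-1}$, which the statement treats as the same cycle).

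Assembling: $\chi(Q,wt) = 0 \iff \dim_\QQ H_1(B;\QQ) = 1 \iff$ there is a unique $\Delta$-free cycle, which is (1)$\iff$(2).

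I expect the main obstacle to be the careful bookkeeping in the direction "$\chi = 0 \then$ unique $\Delta$-free cycle," specifically making rigorous the claim that for the $\widetilde A_{n-1}$-shaped underlying graph two distinct simple $\Delta$-free cycles cannot coexist. The cleanest way around this is probably to avoid invoking the global type classification and instead argue purely combinatorially: run the reduction process on a putative pair of distinct $\Delta$-free cycles and use the "at most one triangle per edge" hypothesis to show their union would force $\chi < 0$, contradicting $\chi = 0$; equivalently, a direct Euler-characteristic count on the subcomplex they span. An alternative, fully combinatorial route that sidesteps topology entirely is to induct on $|Q_1|$, peeling off a triangle at a time (each removal changes $|Q_0| - |Q_1| + |Q_2|$ by $0$ and merges/reduces cycles controllably), with the base case a graph with no triangles where $\chi = 0$ iff the graph is a single simple cycle. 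I would present the homological argument as the main line and mention the inductive count as the verification of the one genuinely fiddly step.
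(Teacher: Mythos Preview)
Your homological strategy is a genuinely different route from the paper's, and parts of it are sound: the observation that $H_2(B;\QQ)=0$ follows immediately from the hypothesis that no edge lies in two triangles (the boundaries $\partial T_i$ have pairwise disjoint edge supports, so no nontrivial $\QQ$-combination can have zero boundary), and hence $\chi(Q,wt)=1-\dim_\QQ H_1(B;\QQ)$. But there are two real problems.

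First, you repeatedly invoke that the underlying graph is ``of type $\widetilde A_{n-1}$'' (one independent loop, bracelet shape). That is not a hypothesis of the proposition; it is essentially what is being proved. Every place you use it is circular, and you should strike those appeals entirely. (The $H_2=0$ step, for instance, needs only the one-triangle-per-edge hypothesis.)

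Second, and more seriously, the step ``$\dim_\QQ H_1(B;\QQ)=1\Rightarrow$ unique $\Delta$-free cycle'' does not follow from your argument. Knowing that any two $\Delta$-free cycles are homologous (up to sign) in $B$ does not make them equal as simple cycles in $Q$; in a general $2$-complex two homologous simple loops can be distinct. Your proposed patch (``their union would force $\chi<0$'') is the right instinct, but as stated it is just a restatement of what needs to be shown. The paper handles this cleanly and combinatorially, without homology: delete one edge from each triangle to obtain a connected subgraph $Q'$ with $\chi(Q')=\chi(Q,wt)$. For $(1)\Rightarrow(2)$, $\chi(Q')=0$ forces $Q'$ to have a unique simple cycle; choosing $Q'$ to minimize that cycle's length makes it $\Delta$-free. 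For uniqueness, if $\gamma,\gamma'$ are distinct $\Delta$-free cycles then each triangle has an edge in neither (each $\Delta$-free cycle meets a triangle in at most one edge), so the deletions can be made to retain both $\gamma$ and $\gamma'$ inside $Q'$, contradicting the unique-cycle property of a connected graph with $\chi=0$. For $(2)\Rightarrow(1)$, a $\Delta$-free cycle sits in some $Q'$, giving $\chi\le 0$; if $\chi<0$ then $Q'$ has two non-homotopic cycles, which reduce to distinct $\Delta$-free cycles.

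What each approach buys: your homological packaging explains \emph{why} the Euler characteristic is the right invariant and makes the equivalence $\chi=0\Leftrightarrow\dim H_1=1$ transparent, but it still needs a combinatorial lemma at the crux; the paper's $Q'$ trick is more elementary and dispatches both directions with the single ``connected graph with $\chi=0$ has one cycle'' fact, at the cost of being less conceptual.
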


\begin{proof}
$(1)\then (2)$ We use the following construction. Let $Q'$ be obtained from $Q$ by deleting one edge from each triangle. If $Q$ has $k$ triangles then $Q'$ has no triangles and $k$ fewer edges. So $Q'$ is a graph with $\chi(Q')=\chi(Q,wt)=0$. Since $Q'$ is connected, it contains a unique simple cycle $\gamma$ (and its inverse). Choose $Q'$ so that $\gamma$ has minimal length. Then we claim that $\gamma$ is $\Delta$-free in $(Q,wt)$. Otherwise, $\gamma$ has two edges in one triangle and, changing the choice of $Q'$ on that triangle will give a shorter $\gamma$. So, $(Q,wt)$ has at least one $\Delta$-free cycle. To prove uniqueness, suppose that $(Q,wt)$ has two $\Delta$-free cycles $\gamma,\gamma'$ which are not inverse to each other. Then each triangle has at least one edge which is in neither $\gamma$ nor $\gamma'$. So, we $Q'$ can be chose so that it contains both $\gamma$ and $\gamma'$. But this contradicts the well-known elementary fact that a connected graph with $\chi=0$ has only one cycle.

$(2)\then(1)$ Any $\Delta$-free cycle in $(Q,wt)$ gives a cycle in some $Q'$ showing that $\chi(Q')=\chi(Q,wt)\le0$. If $\chi(Q')<0$ then $Q'$ has two cycles which are not homotopic in $B(Q,wt)$ (to each other or their inverses). The reduction process produces distinct $\Delta$-free cycles contrary to assumption. So, $\chi(Q,wt)=0$.
\end{proof}

\begin{rem}
We only need $(1)\then (2)$ from Proposition \ref{prop: X=0 iff unique minimal cycle}.
\end{rem}

\begin{lem}\label{lem: no trivial weight cycles}
Let $(Q,wt)$ be a connected weighted quiver with $\chi(Q,wt)=0$ so that no arrow belongs to more than one triangle. Suppose that the unique $\Delta$-free cycle in $Q$ has nontrivial weight. Then the only simple cycles with trivial weight are triangles.
\end{lem}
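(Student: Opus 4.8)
The plan is to show that every simple cycle $\sigma$ in $Q$ with trivial weight must be a triangle, arguing by contradiction and using the reduction process of Remark~\ref{rem: reduction process} together with the uniqueness statement of Proposition~\ref{prop: X=0 iff unique minimal cycle}. Write $\gamma$ for the $\Delta$-free cycle of $Q$, unique up to inversion by Proposition~\ref{prop: X=0 iff unique minimal cycle} (using $\chi(Q,wt)=0$ and that no arrow lies on more than one triangle); by hypothesis $wt(\gamma)\ne1$. Now suppose $\sigma$ is a simple cycle with $wt(\sigma)=1$ which is \emph{not} a triangle. If $\sigma$ is $\Delta$-free, then $\sigma=\gamma$ or $\sigma=\gamma^{-1}$, so $wt(\sigma)=wt(\gamma)^{\pm1}\ne1$, a contradiction. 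Otherwise $\sigma$ is neither a triangle nor $\Delta$-free, so the reduction process applies, producing a chain of simple cycles $\sigma=\sigma_0,\sigma_1,\dots,\sigma_m=\overline\sigma$ with $m\ge1$, where each $\sigma_{i+1}$ is obtained from $\sigma_i$ by replacing two consecutive edges lying on a common triangle with the third edge of that triangle, and $\overline\sigma$ is either $\Delta$-free or a triangle. Since triangles have trivial weight, $wt(\overline\sigma)=wt(\sigma)=1$; hence if $\overline\sigma$ is $\Delta$-free, the argument of the previous sentence applied to $\overline\sigma$ again yields a contradiction.

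It remains to rule out the case that $\overline\sigma$ is a triangle $T$, and this is where the hypothesis that no arrow lies on more than one triangle is used. Since $m\ge1$, look at the last reduction step $\sigma_{m-1}\to\sigma_m=T$: it replaces two consecutive edges $e_1,e_2$ of $\sigma_{m-1}$, meeting at a vertex $v$, which both lie on a triangle $T'$, by the third edge $e_3$ of $T'$. Then $e_3$ is an edge of $T'$ by construction, and $e_3$ is also an edge of $\sigma_m=T$, because the edge set of $\sigma_m$ is that of $\sigma_{m-1}$ with $e_1,e_2$ deleted and $e_3$ adjoined. On the other hand, the reduction operates on simple cycles, so $v$ is visited exactly once by $\sigma_{m-1}$ and therefore does not appear in $\sigma_m=T$; but $v$ is a vertex of $T'$, so $T'\ne T$. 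Thus the arrow $e_3$ lies on the two distinct triangles $T$ and $T'$, contradicting the hypothesis. Consequently no reduction step can have occurred, i.e.\ $\sigma=\overline\sigma$ is a triangle, as claimed.

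I expect the main obstacle to be purely the bookkeeping in this last case rather than anything conceptual: one must check that every cycle $\sigma_i$ in the reduction chain is again a simple cycle, which is needed both to conclude that the deleted vertex $v$ is genuinely absent from $T$ and to know that $e_3$ was not already an edge of $\sigma_{m-1}$ (for otherwise $\sigma_{m-1}$ would itself be the triangle $T'$ and the reduction would not be applied to it). This is a routine induction: replacing $e_1,e_2$ by $e_3$ removes only the middle vertex $v$ and leaves every remaining vertex visited exactly once, so simplicity is preserved at each step.
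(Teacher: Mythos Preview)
Your proof is correct and follows essentially the same route as the paper: reduce a trivial-weight simple cycle using Remark~\ref{rem: reduction process}, rule out the $\Delta$-free case by weight and Proposition~\ref{prop: X=0 iff unique minimal cycle}, and in the triangle case observe that the final reduction step produces an edge shared by two distinct triangles. The paper's version compresses your last paragraph into a single sentence, but the argument is the same.
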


\begin{proof}
Let $\gamma$ be any simple cycle with trivial weight. Then the reduced cycle $\overline\gamma$ cannot be equal to the unique $\Delta$-free cycle since $\overline\gamma$ has trivial weight. So, $\overline\gamma$ must be a triangle. This implies $\gamma=\overline\gamma$ is a triangle since, otherwise, the last reduction step uses a triangle which meets $\overline\gamma$ in exactly one edge and that edge would belong to two triangles.
\end{proof}

The following is the weighted version of the well-known classification of cluster-tilted algebras of type $\widetilde A_{n-1}$. (See, e.g., \cite{Bastian}.)

\begin{defn}\label{def: Cn(t)} Let $t\in G$ be a fixed nontrivial element of a fixed nontrivial group $G$. Let $\cC_n(t)$ be the collection of connected weighted quivers $(Q,wt)$ with $n$ vertices satisfying the following:
\begin{enumerate}
\item No edge of $Q$ belongs to more than one triangle (Def. \ref{def: triangle, Euler char}).
\item $\chi(Q,wt)=0$.
\item The unique $\Delta$-free cycle of $(Q,wt)$ is unoriented and has weight $t$ (or $t^{-1}$). In particular, $Q$ has no loops and no oriented $2$-cycles.
\item Valency: every vertex of $Q$ has degree $\le 4$.
	\begin{enumerate}
	\item If vertex $v$ of $Q$ has degree 3, then $v$ belongs to one triangle.
	\item If vertex $v$ of $Q$ has degree 4, then $v$ belongs to two triangles.
	\end{enumerate}
\end{enumerate}
\end{defn}



\begin{prop}\label{prop: Cn(t) is mutation invariant}
$\cC_n(t)$ is invariant under mutation.
\end{prop}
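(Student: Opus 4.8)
The plan is to fix $(Q,wt)\in\cC_n(t)$ and a vertex $v$, and show each of the four defining conditions survives mutation at $v$. Since the number of vertices is unchanged, and by Lemma~\ref{lem 01} the equivalence class of the weight is carried along by mutation (so ``weight'' statements make sense after mutation), the real content is combinatorial: understanding how triangles, the $\Delta$-free cycle, and vertex degrees transform. I would organize the argument around the local picture at $v$: let the arrows at $v$ be partitioned into those into $v$ and those out of $v$, and recall that mutation replaces a length-$2$ path $a:i\to v$, $b:v\to j$ by a composite $[ab]:i\to j$ of weight $wt(a)wt(b)$, reverses the arrows at $v$, inverts their weights, and then deletes $2$-cycles of trivial weight.

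The key step is a case analysis by $\deg_Q(v)\in\{1,2,3,4\}$, using condition (4) which pins down exactly how $v$ sits inside triangles. For $\deg v=1$ mutation is essentially trivial (it just reverses one arrow), and the $\Delta$-free cycle, being simple and having $\ge 2$ vertices of degree $\ge 2$ on it unless $n$ is tiny, cannot pass through $v$ essentially — I would check the small cases by hand. For $\deg v=2$: either the two edges at $v$ form a path through $v$ that is one edge of a triangle (then mutation creates the third edge, turning that triangle into a new one on the opposite side, or completes/deletes a $2$-cycle), or the two edges at $v$ are two edges of a triangle (then $v$ is the apex of a triangle, mutation reverses that triangle's orientation at $v$ and the composite edge cancels against the base edge, effectively ``contracting'' the triangle), or the two edges are unrelated by triangles. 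In each subcase I would verify: no edge ends up in two triangles, $\chi$ is unchanged, and the homotopy type of $B(Q,wt)$ — hence the existence and weight of the unique $\Delta$-free cycle via Proposition~\ref{prop: X=0 iff unique minimal cycle} and Remark~\ref{rem: reduction process} — is preserved. For $\deg v=3$ (one triangle at $v$) and $\deg v=4$ (two triangles at $v$), the same bookkeeping applies but with more arrows; here one crucially uses that the new edges $[ab]$ created between neighbors of $v$ either (a) coincide with an existing edge and form a trivial-weight $2$-cycle that gets deleted, exactly accounting for the reversal of a triangle through $v$, or (b) form a genuinely new triangle. The weight hypothesis — triangles have trivial weight — is what makes the $[ab]$ edge have the correct weight to cancel, so the combinatorics and the weights stay in lockstep.

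The cleanest way to control conditions (2) and (3) simultaneously is the topological reading: $\chi(Q,wt)=\chi(B(Q,wt))$, and I claim mutation at $v$ induces a homotopy equivalence $B(\mu_v(Q,wt))\simeq B(Q,wt)$. Granting this, $\chi=0$ is immediate, and since $B$ has the homotopy type of a circle, Proposition~\ref{prop: X=0 iff unique minimal cycle} gives a unique $\Delta$-free cycle on each side; its weight is a homotopy invariant (the image of the generator of $\pi_1\cong\ZZ$ under the weight map to $G$), so it stays $t^{\pm 1}$, in particular nontrivial, so by Lemma~\ref{lem: no trivial weight cycles} the only trivial-weight simple cycles are triangles — which is what keeps condition (1)'s ``at most one triangle per edge'' from being violated by some accidental new trivial $3$-cycle. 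Establishing this homotopy equivalence concretely — checking that adding $[ab]$ (filling a new triangle) and simultaneously deleting the $2$-cycle and its filled triangle, together with reversing edges at $v$, is an elementary expansion/collapse — is the part I expect to be the main obstacle, precisely because one must ensure the deleted/added $2$-cells are always present as honest triangles (trivial weight) and never leave a ``half-filled'' configuration; this is exactly where condition (4) is indispensable, since it guarantees every neighbor-pair of $v$ that receives a composite edge was already linked by the right triangle structure. Finally, condition (4) itself (the valency bound and the triangle count per vertex) must be re-verified directly from the same local case analysis — it does not follow from the topology — but it reduces to counting arrows at $v$ and at each neighbor before and after, which the case analysis already lays out.
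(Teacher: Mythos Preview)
Your overall strategy---case analysis on $\deg_Q(v)$ plus a topological reading via $B(Q,wt)$---is reasonable and overlaps with the paper's approach, but there is one genuine gap and one logical ordering problem.

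\textbf{The gap: unorientedness of the $\Delta$-free cycle.} Condition~(3) requires not only that the unique $\Delta$-free cycle have weight $t^{\pm1}$, but that it be \emph{unoriented}. Your homotopy-equivalence argument cannot see this: $B(Q,wt)$ is built from the underlying unoriented graph, so even if $B(\mu_v(Q,wt))\simeq B(Q,wt)$ as spaces, you recover existence, uniqueness, and weight of the $\Delta$-free cycle, but nothing about the orientations of its arrows. The paper handles this by explicitly tracking the cycle $\gamma$ through mutation: if $v$ lies opposite an edge of $\gamma$ in a triangle, that edge is replaced by the other two (reversed); if $v$ lies on $\gamma$ with the two adjacent arrows pointing the same way, they are replaced by a single new arrow; if they point opposite ways, both are reversed. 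In each case one checks directly that the resulting cycle is still unoriented with the same weight. You need this explicit check somewhere; the topology does not supply it.

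\textbf{The ordering problem.} Both Proposition~\ref{prop: X=0 iff unique minimal cycle} and Lemma~\ref{lem: no trivial weight cycles} have ``no arrow belongs to more than one triangle'' as a hypothesis. So you cannot invoke either one for $\mu_v(Q,wt)$ until condition~(1) has already been established for the mutated quiver. Your last paragraph uses Lemma~\ref{lem: no trivial weight cycles} to argue that condition~(1) is not violated, which is circular. You do say earlier that you would verify ``no edge ends up in two triangles'' in the subcase analysis; that is the right place for it, and once done the topological argument for (2) becomes legitimate---but then the appeal to Lemma~\ref{lem: no trivial weight cycles} is unnecessary. The paper simply observes that any new triangle created by mutation contains $v$, and the local picture at $v$ (governed by condition~(4)) makes it immediate that no edge lands in two of them.

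For comparison, the paper's proof dispenses with the homotopy-equivalence layer entirely: it checks $\chi$ is preserved by a one-line count in each degree case (for $\deg v\in\{1,3,4\}$ nothing changes; for $\deg v=2$ edges and triangles go up or down by one together), checks (1) in one sentence, constructs the new $\Delta$-free cycle explicitly as above for (3), and does the valency case analysis for (4). Your topological packaging is conceptually appealing but does not actually shorten the argument, since the case analysis is still needed for (1), (4), and---as noted---for the unoriented part of (3).
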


\begin{proof} First we claim that mutation at any vertex $v$ does not change the Euler characteristic of $(Q,wt)$. If $v$ has degree 1, 3 or 4 this is clear since the number of vertices, edges and triangles remain the same. If $v$ is 2-valent, then the number of vertices remains the same but the number of edges and triangles either both increase by one or both decrease by one. So, $\chi(\mu_v(Q,wt))=\chi(Q,wt)=0$ in all cases. Condition (1) is also clearly preserved by mutation at any vertex since any new triangles contains that vertex and these new triangles satisfy (1) by construction.

To show (3), let $\gamma$ be the unique $\Delta$-free cycle of $(Q,wt)$. If the mutation vertex $v$ lies in a triangle and the arrow $\gamma$ opposite $v$ in the triangle lies in $\gamma$, then a new cycle $\gamma'$ for $\mu_v(Q,wt)$ is obtained by replacing $\alpha$ with the other two edges of the triangle with orientation reversed. Then $\gamma'$ will be $\Delta$-free and unoriented with the same weight at $\gamma$. If $v$ lies on $\gamma$, the two adjacent arrows in $\gamma$ have either the same or opposite orientation. In the first case, these two arrows in $\gamma$ will be replaced by a new arrow pointing in the same direction with the same weight giving again an unoriented $\Delta$-free cycle for $\mu_k(Q,wt)$ of the same weight as $\gamma$. In the second case, the direction of the two arrows are reversed and we obtain a new unoriented cycle $\gamma'$ for $\mu_v(Q,wt)$ with the same weight. If neither of these happens then $\gamma'=\gamma$ and there is nothing to prove. Therefore, $\mu_k(Q,wt)$ satisfies (3) in all cases.

Lastly, we show condition (4) is preserved. Suppose $Q$ is a weighted quiver in $\cC_n(t)$ and let $v$ be a vertex of valency 1. Then $\mu_v (Q)$ will be a quiver whose orientation and weight at the edge incident to $v$ has become inverted. So $\mu_v (Q) \in \cC_n(t)$. Now suppose $v$ has valency 2. Case 1 (edges do not belong to an oriented 3-cycle) If $v$ is a sink or source, then clearly the mutated quiver belongs to $\cC_n(t)$. However, if $v$ is neither a sink or source, then mutating $Q$ at $v$ produces an oriented 3-cycle of trivial weight. Case 2 (edges belong to a 3-cycle). Mutating at vertex $v$ kills the arrow opposite it and reverses the orientation of incident arrows. Thus, $\mu_v(Q) \in \cC_n(t)$. Now consider the case where $v$ is 3-valent. Mutating at $v$ will kill the arrow in the 3-cycle directly opposite to $v$, reverse the orientation of all three edges incident to $v$, and form another 3-cycle of trivial weight (by construction) with $v$ being one of the vertices of the cycle. So again we remain in $\cC_n(t)$. Finally, we treat the last case. Let $v$ be 4-valent. Then, mutation at $v$ will kill both edges opposite to $v$ and produce two oriented 3-cycles of trivial weight.
\end{proof}

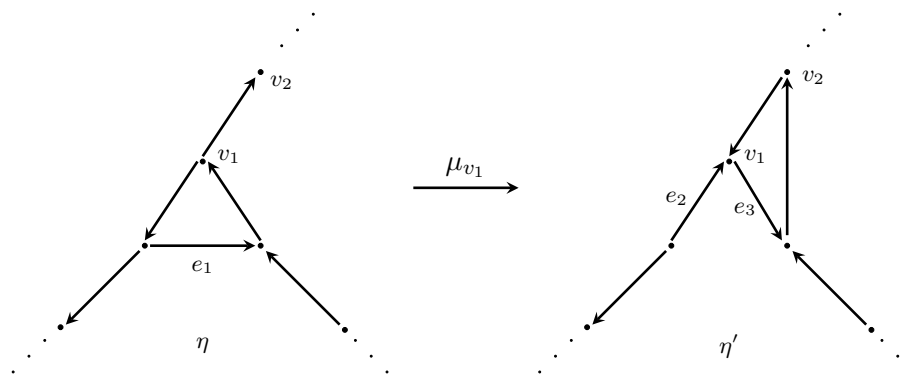
\begin{figure}
\begin{tikzpicture}[scale=0.7]




\begin{scope}

\draw [<-,>=stealth, line width= 1pt] (-7.6,-0.6) -- (-6.2,0.8) ;
\draw [<-,>=stealth, line width= 1pt] (-3.8,0.8) -- (-2.4,-0.6) ;

\draw [->,>=stealth, line width= 1pt] (-6,0.9) -- (-4,0.9) ;
\draw [->,>=stealth, line width= 1pt] (-3.9,1) -- (-4.9,2.5) ;
\draw [->,>=stealth, line width= 1pt] (-5.1,2.5) -- (-6.1,1) ;

\draw [->,>=stealth, line width= 1pt] (-5,2.6) -- (-4,4.1) ;

\draw[black, fill=black] (-6.1,0.9) circle[radius=1.3pt];
\draw[black, fill=black] (-3.9,0.9) circle[radius=1.3pt];
\draw[black, fill=black] (-5,2.5) circle[radius=1.3pt];
\draw[black, fill=black] (-3.9,4.2) circle[radius=1.3pt];
\draw[black, fill=black] (-7.7,-0.65) circle[radius=1.3pt];
\draw[black, fill=black] (-2.3,-0.7) circle[radius=1.3pt];

\begin{scope}[xshift=-1cm,yshift=-1.5cm]
\draw[black, fill=black] (-2.5,6.2) circle[radius=0.5pt];
\draw[black, fill=black] (-2.2,6.5) circle[radius=0.5pt];
\draw[black, fill=black] (-1.9,6.8) circle[radius=0.5pt];
\end{scope}

\draw[black, fill=black] (-8.6,-1.5) circle[radius=0.5pt];
\draw[black, fill=black] (-8.3,-1.2) circle[radius=0.5pt];
\draw[black, fill=black] (-8,-.9) circle[radius=0.5pt];

\draw[black, fill=black] (-2.1,-.9) circle[radius=0.5pt];
\draw[black, fill=black] (-1.8,-1.2) circle[radius=0.5pt];
\draw[black, fill=black] (-1.5,-1.5) circle[radius=0.5pt];

\draw  (-5,0.5) node {\footnotesize $e_1$};
\draw  (-4.5,2.6) node {\footnotesize $v_1$};
\draw  (-3.5,4) node {\footnotesize $v_2$};
\draw (-5,-1) node {\footnotesize $\eta$};

\begin{scope}[yshift=-1cm]
\draw [->,>=stealth, line width= 1pt] (-1,3) -- (1,3) node[midway, sloped,above, rotate = 0] {$\mu_{v_1}$};
\end{scope}

\end{scope}

\begin{scope}[xshift=10cm]

\draw [<-,>=stealth, line width= 1pt] (-7.6,-0.6) -- (-6.2,0.8) ;
\draw [<-,>=stealth, line width= 1pt] (-3.8,0.8) -- (-2.4,-0.6) ;


\draw [<-,>=stealth, line width= 1pt] (-5.1,2.5) -- (-6.1,1) ;

\draw [<-,>=stealth, line width= 1pt] (-5,2.6) -- (-4,4.1) ;
\draw [<-,>=stealth, line width= 1pt] (-3.9,4.1) -- (-3.9,1.1) ;
\draw [<-,>=stealth, line width= 1pt] (-4,1) -- (-4.9,2.5) ;



\draw[black, fill=black] (-6.1,0.9) circle[radius=1.3pt];
\draw[black, fill=black] (-3.9,0.9) circle[radius=1.3pt];
\draw[black, fill=black] (-5,2.5) circle[radius=1.3pt];
\draw[black, fill=black] (-3.9,4.2) circle[radius=1.3pt];
\draw[black, fill=black] (-7.7,-0.65) circle[radius=1.3pt];
\draw[black, fill=black] (-2.3,-0.7) circle[radius=1.3pt];

\begin{scope}[xshift=-1cm,yshift=-1.5cm]
\draw[black, fill=black] (-2.5,6.2) circle[radius=0.5pt];
\draw[black, fill=black] (-2.2,6.5) circle[radius=0.5pt];
\draw[black, fill=black] (-1.9,6.8) circle[radius=0.5pt];
\end{scope}

\draw[black, fill=black] (-8.6,-1.5) circle[radius=0.5pt];
\draw[black, fill=black] (-8.3,-1.2) circle[radius=0.5pt];
\draw[black, fill=black] (-8,-.9) circle[radius=0.5pt];

\draw[black, fill=black] (-2.1,-.9) circle[radius=0.5pt];
\draw[black, fill=black] (-1.8,-1.2) circle[radius=0.5pt];
\draw[black, fill=black] (-1.5,-1.5) circle[radius=0.5pt];

\draw  (-4.7,1.6) node {\footnotesize $e_3$};
\draw  (-6,1.8) node {\footnotesize $e_2$};
\draw  (-4.5,2.6) node {\footnotesize $v_1$};
\draw  (-3.4,4.1) node {\footnotesize $v_2$};
\draw (-5,-1) node {\footnotesize $\eta'$};
\end{scope}

\end{tikzpicture}
\caption{Proof of Proposition \ref{prop: Cn(t) equivalent to one cycle}: mutation at $v_1$ increases the length of the minimum cycle $\eta$. $wt(\eta)=wt(\eta')$ since $wt(e_1)=wt(e_2)wt(e_3)$.} \label{fig:M2}
\end{figure}

\begin{prop}\label{prop: Cn(t) equivalent to one cycle}
Any weighted quiver $Q \in \cC_n(t)$ is mutation equivalent to an unoriented $n$-cycle of weight $t$ or $t^{-1}$.
\end{prop}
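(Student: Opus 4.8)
The plan is to induct on $n$, or more precisely on the number of edges of $Q$, using mutation to strictly decrease the size of the quiver toward the target $n$-cycle. By Proposition \ref{prop: Cn(t) is mutation invariant}, every quiver reachable by mutation stays in $\cC_n(t)$, so it suffices to show that whenever $Q\in\cC_n(t)$ is not already an unoriented $n$-cycle, there is a mutation that makes it ``smaller'' in a suitable sense. The natural measure of size is the length of the unique $\Delta$-free cycle $\eta$ (which exists and is unique by Proposition \ref{prop: X=0 iff unique minimal cycle}, since condition (1) of $\cC_n(t)$ guarantees no edge lies in two triangles). If $\eta$ has length $n$, then since $\chi(Q,wt)=0$ forces $|Q_1|-|Q_2|=n$ and every triangle-free cycle must be $\eta$, a short combinatorial argument using the valency condition (4) shows $Q$ has no triangles at all and equals the $n$-cycle. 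So the real content is: if $\ell(\eta)<n$, find a vertex $v$ at which mutation increases $\ell(\eta)$ while keeping us in $\cC_n(t)$.

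The key step is the choice of $v$. Since $\ell(\eta)<n$, there is a vertex of $Q$ not on $\eta$, hence (as $Q$ is connected and has $n$ vertices but $\eta$ uses fewer) there is a triangle $T$ meeting $\eta$ in exactly one edge $e_1=\{v_1,v_2\}$, with the third vertex of $T$ — call it $w$ — lying off $\eta$; I would pick $v=v_1$ to be one of the two vertices of this shared edge. Following Figure \ref{fig:M2}, mutating at $v_1$ replaces the edge $e_1$ of $\eta$ by the path through $w$ formed by the other two edges $e_2, e_3$ of $T$ (with reversed orientations), because $v_1$ has degree $3$ or $4$ and sits in the triangle; the mutation kills the edge opposite $v_1$ in $T$, namely $e_1$, and the relation $wt(e_1)=wt(e_2)wt(e_3)$ (triangles have trivial weight) ensures the new cycle $\eta'$ through $w$ still has weight $t^{\pm1}$ and is again $\Delta$-free and unoriented. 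Thus $\ell(\eta')=\ell(\eta)+1$. Iterating, $\ell(\eta)$ eventually reaches $n$ and we are done.

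The main obstacle I anticipate is verifying that such a triangle $T$ — meeting $\eta$ in exactly one edge and having its apex off $\eta$ — always exists when $\ell(\eta)<n$, and that mutating at $v_1$ genuinely lengthens $\eta$ rather than, say, creating a shorter $\Delta$-free cycle elsewhere or violating the valency bound (4). For existence: any vertex $w\notin\eta$ is connected to $\eta$, and chasing a shortest path from $w$ to $\eta$ together with the structural constraints of $\cC_n(t)$ (degrees $\le 4$, triangles controlled by (4), no edge in two triangles) should force such a $w$ to be the apex of a triangle attached along an edge of $\eta$; one may need to handle ``pendant'' configurations and chains of triangles carefully. For the valency check, one runs through the same case analysis as in Proposition \ref{prop: Cn(t) is mutation invariant} at the vertex $v_1$ of degree $3$ or $4$, which is already done there. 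Uniqueness of the $\Delta$-free cycle after mutation is guaranteed since $\chi$ is unchanged and Proposition \ref{prop: X=0 iff unique minimal cycle} still applies, so $\eta'$ is forced to be \emph{the} $\Delta$-free cycle; its weight being $t^{\pm1}$ then follows from the weight computation above, so $\mu_{v_1}(Q,wt)\in\cC_n(t)$ as required.
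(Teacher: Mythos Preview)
Your overall strategy matches the paper's: induct on $n-\ell(\eta)$, locate a triangle $T$ sharing one edge with the $\Delta$-free cycle $\eta$, and mutate so as to lengthen $\eta$. The error is in the choice of mutation vertex. You set $e_1=\{v_1,v_2\}\subset\eta$ and call the apex $w$, then mutate at $v_1$ and claim this ``kills the edge opposite $v_1$ in $T$, namely $e_1$''. But $e_1$ is \emph{incident} to $v_1$, not opposite it; the edge of $T$ opposite $v_1$ is $v_2w$. Mutation at a vertex of an oriented $3$-cycle removes the \emph{opposite} arrow, so mutating at your $v_1$ deletes $v_2w$, not $e_1$, and then creates a new triangle containing $v_1$ and its other $\eta$-neighbour $u$. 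Depending on orientations this either leaves $\ell(\eta)$ unchanged or, worse, produces a new shortcut $u\!-\!v_2$ that \emph{shortens} the $\Delta$-free cycle by one. So the induction does not go through with this choice.

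The paper mutates at the apex (which in Figure~\ref{fig:M2} is the vertex labelled $v_1$; note that in that figure $v_1$ is the top vertex of the triangle, \emph{off} $\eta$, and $e_1$ is the bottom edge). Mutating at the apex kills precisely the opposite edge $e_1\subset\eta$, so the cycle is forced to detour through the apex via the other two sides, giving $\ell(\eta')=\ell(\eta)+1$ with the same weight since $wt(e_1)=wt(e_2)wt(e_3)$. If you replace your choice $v=v_1$ by $v=w$ and adjust the sentence about which edge is killed, the rest of your outline (existence of such a triangle when $\ell(\eta)<n$, invariance under $\cC_n(t)$ via Proposition~\ref{prop: Cn(t) is mutation invariant}, uniqueness of the new $\Delta$-free cycle via Proposition~\ref{prop: X=0 iff unique minimal cycle}) is the paper's argument.
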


\begin{proof}
We prove this by induction on $n$ minus the length of the minimal cycle in $Q$. If the minimum cycle has length $n$ then we are done. Otherwise, there must be a triangle with one side on the minimum cycle. Mutation at the opposite vertex will increase the length of the minimal cycle by 1 and the weight of the cycle will be unchanged. See Figure \ref{fig:M2}. By induction $(Q,wt)$ is mutation equivalent to a single unoriented cycle.
\end{proof}

As a consequence of Prop \ref{prop: Cn(t) is mutation invariant} and Prop \ref{prop: Cn(t) equivalent to one cycle}, we have the following theorem:

\begin{thm}\label{thm: Cn(t) is class}
The union of all $\cC_n(t)$ for all nontrivial $t$ is the class of weighted quivers of type $\widetilde{A}_n$ with nontrivial nondegenerate weights.
\end{thm}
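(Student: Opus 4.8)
The plan is to prove the asserted equality of classes as two inclusions, with Propositions \ref{prop: Cn(t) is mutation invariant} and \ref{prop: Cn(t) equivalent to one cycle} doing the heavy lifting.

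\emph{First inclusion: every $(Q,wt)\in\cC_n(t)$ with $t\neq 1$ is a weighted quiver of type $\widetilde A$ carrying a nondegenerate, nontrivial weight.} Given such a $(Q,wt)$, Proposition \ref{prop: Cn(t) equivalent to one cycle} shows its underlying quiver is mutation equivalent to an unoriented $n$-cycle, which is a tame acyclic quiver of type $\widetilde A$; so $(Q,wt)$ is of type $\widetilde A$. By Proposition \ref{prop: Cn(t) is mutation invariant}, any weighted quiver obtained from $(Q,wt)$ by a finite sequence of mutations again lies in $\cC_n(t)$, and by condition (3) of Definition \ref{def: Cn(t)} it then has no loops or oriented $2$-cycles; hence $(Q,wt)$ is nondegenerate. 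Finally, the unique $\Delta$-free cycle of $(Q,wt)$ has weight $t\neq 1$, whereas the trivial weight assigns $1$ to every cycle, so by Proposition \ref{prop: weights on cycles} the weight $wt$ is not equivalent to the trivial weight.

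\emph{Reverse inclusion.} Let $(Q,wt)$ be a nondegenerate weighted quiver that is not equivalent to the trivial weight and whose underlying quiver $Q$ is mutation equivalent, as an unweighted quiver, to an unoriented $n$-cycle $C$. Fix a mutation sequence $\mu$ with $\mu(Q)=C$. Since $(Q,wt)$ is nondegenerate, at each stage of applying $\mu$ to the weighted quiver the weight-reduction step removes \emph{every} $2$-cycle that appears — a surviving $2$-cycle would have nontrivial weight, contradicting nondegeneracy — so the underlying quivers evolve exactly as under ordinary quiver mutation, giving $\mu(Q,wt)=(C,wt')$ for some weight $wt'$ on $C$. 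Mutation carries the trivial weight to the trivial weight (inverses and products of $1$ are $1$) and preserves equivalence of weights by Lemma \ref{lem 01}, so ``equivalent to the trivial weight'' is a mutation-invariant property; since $wt$ is not equivalent to the trivial weight, neither is $wt'$, and therefore $wt'$ assigns a nontrivial element $t'$ (unique up to inversion) to the cycle $C$. One checks at once that $(C,wt')\in\cC_n(t')$: $C$ has no triangles, so $\chi(C,wt')=n-n+0=0$; its only simple cycle is $C$ itself, which is unoriented of weight $t'$; and every vertex of $C$ has degree $2\le 4$, so the valency conditions of Definition \ref{def: Cn(t)} are vacuous. Because quiver mutation is an involution, and weighted mutation likewise is an involution on weighted quivers without $2$-cycles, $\cC_n(t')$ is closed under the inverse mutations of $\mu$, whence $(Q,wt)=\mu^{-1}(C,wt')\in\cC_n(t')$.

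\emph{Main obstacle.} The delicate step is the bookkeeping in the reverse inclusion: one must verify carefully that, for a nondegenerate weighted quiver, weighted mutation agrees on underlying quivers with ordinary quiver mutation and is an involution, so that a mutation path realizing $Q\sim C$ genuinely lifts to a mutation path $(Q,wt)\sim(C,wt')$ of weighted quivers. This is precisely where nondegeneracy is invoked at every intermediate stage, to ensure the weight-reduction step eliminates each $2$-cycle rather than stalling on one of nontrivial weight. Once that is in place, the remaining checks — that the weighted $n$-cycle lies in $\cC_n(t')$ and that $t'\neq 1$ — are routine.
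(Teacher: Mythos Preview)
Your proof is correct and is precisely the natural elaboration of the paper's own argument, which consists only of the sentence ``As a consequence of Propositions \ref{prop: Cn(t) is mutation invariant} and \ref{prop: Cn(t) equivalent to one cycle}, we have the following theorem.'' Your two inclusions, together with the observation that nondegeneracy forces weighted and unweighted mutation to agree on underlying quivers (so that a mutation path to an $n$-cycle lifts to the weighted setting), supply exactly the details the paper leaves implicit.
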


\begin{rem}By Theorem \ref{thm: no nontrivial weights on Euclidean D,E}, these are all the tame weighted quivers with nontrivial nondegenerate weight.
\end{rem}


\end{document}